\numberwithin{equation}{section}
\theoremstyle{plain}
\newtheorem{theorem}{Theorem}[section]
\newtheorem*{the1}{Theorem A}
\newtheorem*{the2}{Theorem B}
\newtheorem*{the3}{Theorem C}
\newtheorem*{the4}{Theorem D}
\newtheorem*{h-1}{H}
\newtheorem*{h0}{H0}
\newtheorem*{h1}{H1}
\newtheorem*{h2}{H2}
\newtheorem*{h3}{H3}
\newtheorem*{open}{Further Questions}
\newtheorem{definition}{Definition}[section]%[chapter]
\newtheorem{lemma}[theorem]{Lemma}
\newtheorem{coro}{Corollary}[section]
\newtheorem{rem}{Remark}[section]
\begin{document}
\vskip 5.1cm
\title{Entropies of commuting transformations on Hilbert spaces \footnotetext{MR Subject Classification(2010): 37H15, 37A35, 37C85, 37H99.
37D20.\\ Keywords: SRB measures; Pesin's entropy formula; $\mathbb{N}^2$-action; infinite dimensional random dynamical system.}}
\author {Zhiming Li\\
\small {School of Mathematics,}\\
\small {Northwest University, Xi'an, 710127, P.R.China}\\
\small{china-lizhiming@163.com}\\
Yujun Zhu\\
\small {School of Mathematical Sciences}\\
\small {Xiamen University, Xiamen, 361005, P. R. China}\\
\small{yjzhu@xmu.edu.cn}}
\date{}
\maketitle

\begin{center}
\begin{minipage}{130mm}
{\bf Abstract}: By establishing Multiplicative Ergodic Theorem for commutative transformations on a separable infinite dimensional Hilbert space, in this paper, we investigate Pesin's entropy formula and SRB measures of a finitely generated random transformations on such space via its commuting generators. Moreover, as an application, we give a formula of Friedland's entropy for certain $C^{2}$ $\mathbb{N}^2$-actions.
\end{minipage}
\end{center}

\section{\hspace*{-0.15in}. Introduction}

The significance of Pesin's entropy formula (or Ledrappier-Young's entropy formula for SRB measures) lies in its characterizing SRB measures by their Lyapunov exponents and entropy \cite{LY}. Pesin's entropy formula for random transformations and stochastic flows of diffeomorphisms in finite dimensional compact spaces were established in \cite{BL98,LS88,LQ95,K1}. The extension of the above theories to infinite dimensional spaces were presented in \cite{b,l0,l1,l2,L1,L2,T1,T2}. In this paper, we further study the Pesin's entropy formula and SRB measures for random transformations generated by finitely commutative transformations in infinite dimensional Hilbert spaces via its generators, which can be viewed as a generalization of the work in \cite{Hu,Kalinin,z1,z2} to the infinite dimension spaces. However, the techniques and strategies are completely different due to the feature of infinite dimensional smooth dynamics. For more recent progress of SRB measures in infinite dimensional spaces, we refer to the elegant survey \cite{y}.

To obtain the relations of metric entropy of the random transformation and the Lyapunov exponents of its generators, the basic strategy is to estimate the random exponential expanding rate in a deterministic subspace by exponential expanding rates of generators in this subspace. Intuitively, random exponential expanding rate should be the weighted combination of exponential expanding rates of generators, and the weights depend on the probability law of choosing the generators for each iteration. So we first establish Multiplicative Ergodic Theorem (Theorem A) for commutative transformations on a separable infinite dimensional Hilbert space, which is a higher rank group actions version of \cite{LL} and infinite dimensional version of \cite{Hu}. By our assumptions, the deterministic subspace is the common expanding subspace of each generators. Then by comparing the dynamics of the random transformation with the dynamics of its generators, we reformulate Ruelle's entropy inequality (Theorem B), the Pesin's entropy formula and SRB measures (Theorem C) via the generators. Moreover, as an application, we give a formula of Friedland's entropy (Theorem D) for certain $C^{2}$ $\mathbb{N}^2$-actions.

This paper is organized as follows. In Section 2, basic notions such as finitely generated random transformations, Lyapunov exponents, metric entropy and Friedland's entropy will be introduced. Then we will state the main results (Theorem A-Theorem D). Section 3 is devoted to the proofs of the main results.

\section{\hspace*{-0.15in}. Preliminaries and Statement of Main Results}

Let $X$ be a
separable infinite dimensional Hilbert space with inner product
$<\cdot, \cdot>$, norm $\|\cdot\|$, distance function $d$ and
$\sigma$-algebra $\mathcal{B}$ of Borel sets.

\subsection{Deterministic Infinite Dimensional Dynamical Systems}
We begin with the notion of $C^1$ map. Let $L(X, Y)$ denote the
collection of bounded linear operators from Banach space $X$ to $Y$.
Let $U$ be a non-empty open subset of $X$. A measurable map $g:
U\mapsto Y$ is said to be $C^1$ if there exists $\{d_x g:\ X\mapsto
Y\}_{x\in U}$ of $L(X, Y)$ such that i) for each $x\in U$,
\[
\lim\limits_{y\rightarrow x}\frac{\|g(x)-g(y)-d_xg(x-y)\|}{\|x-y\|}=0;
\]
ii) the map $x\to d_x g$ is continuous from $U$ to $L(X, Y)$. The
map $g$ is said to be $C^2$ if its derivative $d_{(\cdot)}g$ is also
$C^1$ from $U$ to $L(X, Y)$.
For any bounded subset $A$ of $X$,  denoted by $\alpha(A)$ the smallest nonnegative
real number $r$ such that $A$ can be covered by finite many Borel
balls of $X$ with radius at most $r$. (It is called the \emph{Kuratowski measure of non-compactness} of the set $A$.)
 Define also the index of
compactness of a map $g: X\to X$ as being the number
\begin{equation}\label{00}
\|g\|_{\alpha}:=\inf\{k>0:\ \ \alpha(g(A))\leq k\alpha(A)\ \mbox{for
any bounded set }A\ \mbox{of}\ X\}.
\end{equation}
In case $g$ is a continuous linear operator, we have
$\|g\|_{\alpha}=\alpha(g(B_{X}))$, where $B_{X}$ is the open unit
ball of $X$. Let $h$ be another continuous linear operator of $X$,
then we have \begin{equation}\label{e-2.1} \|g+h\|_{\alpha}\leq
\|g\|_{\alpha}+\|h\|_{\alpha},\ \ \|g\circ h\|_{\alpha}\leq
\|g\|_{\alpha}\cdot\|h\|_{\alpha}.
\end{equation}
Then for any $C^1$ map $g:X\rightarrow X$ and $g$-invariant compact set $K\subset X$, (\ref{e-2.1}) gives the existence of the limits $$l_{\alpha}(g):=\lim_{n\to\infty}\frac{1}{n} \log \sup_{x\in K}\|d_xg^n\|_{\alpha}$$
and $$l_{\alpha}(x, g):=\lim_{n\to\infty}\frac{1}{n} \log \|d_xg^n\|_\alpha\,\,\mbox{for\,}\mu\,\mbox{almost\,every\,}x\in K,$$where $\mu$ is any $g$-invariant measure.

For $(\xi_{1},\cdots, \xi_{p})\in X^{p}$, $p\in \mathbb{N}$, define
\begin{equation*}
  V_{p}(\xi_1,\cdots, \xi_{p}):=\left(\prod_{i=1}^{p-1}\mbox{dist}(\xi_i, \mbox{span}\{\xi_{i+1},\cdots,
  \xi_{p}\})\right)\cdot \|\xi_{p}\|,
\end{equation*}
where for $i=1,2,\cdots, p-1$, \[\mbox{dist}(\xi_i,
\mbox{span}\{\xi_{i+1},\cdots,
  \xi_{p}\})=\inf\left\{\|\xi_i-\eta\|:\ \ \eta\in \mbox{span}\{\xi_{i+1},\cdots,
  \xi_{p}\}\right\}.\]
For $T\in L(X, X)$, define
\begin{equation}\label{tiji}
V_{p}(T):=\sup_{\|\xi_i\|=1, 1\leq i\leq p}V_{p}(T(\xi_1),\cdots,
T(\xi_{p})).
\end{equation}
%For $(\xi_{1},\cdots, \xi_{p})\in X^{p}$, $p\in \Bbb N$, define
%\begin{equation*}
  %V_{p}(\xi_1,\cdots, \xi_{p}):=\left(\prod_{i=1}^{p-1}\mbox{dist}(\xi_i, \mbox{span}\{\xi_{i+1},\cdots,
  %\xi_{p}\})\right)\cdot \|\xi_{p}\|,
%\end{equation*}
%where for $i=1,2,\cdots, p-1$, \[\mbox{dist}(\xi_i,
%\mbox{span}\{\xi_{i+1},\cdots,
  %\xi_{p}\})=\inf\left\{\|\xi_i-\eta\|:\ \ \eta\in \mbox{span}\{\xi_{i+1},\cdots,
  %\xi_{p}\}\right\}.\]
%For $f\in L(X, X)$, define
%$
%V_{p}(f):=\sup_{\|\xi_i\|=1, 1\leq i\leq p}V_{p}(f(\xi_1),\cdots,
%f(\xi_{p})).$\{V_{p}(d_{x}f(n, \omega))\}_{p\in \Bbb N}$
%$
By a detailed exploration of the asymptotic behaviors of $\{V_{p}(d_{x}g^n)\}_{p\in \Bbb N}$, Lian-Lu \cite{LL} proved
the following theorem concerning the existence of Lyapunov
exponents, we only present the part which is adequate for our purposes.
We need the following assumptions to get Multiplicative Ergodic Theorem.
\begin{h-1}
\ \\
(i) $g$ is $C^{1}$ Fr\'echet differentiable and injective;\\
(ii) the derivative of $g$ at $x \in X$, denoted $d_x g$, is also injective;\\
(iii) there exists a $g$-invariant compact set $K\subset X$.
\end{h-1}
\begin{theorem}\cite[Theorem 3.1]{LL}\label{MET}Suppose $g$ satisfies (H). For any $g$-invariant measure $\mu$, and $\lambda_{\alpha} > l_{\alpha}(x, g)$, there is a measurable, $f$-invariant set
$\Gamma_{g} \subset X$
with $\mu(\Gamma_{g})=1$ and at most finitely many real numbers
$$\lambda_1(x,g) > \lambda_2(x,g) > \cdots > \lambda_{r(x,g)}(x,g)$$
with $\lambda_{r(x,g)}(x,g)> \lambda_{\alpha}$ for which the following properties hold.
For any $x \in \Gamma_{g}$, there is a splitting
$$X = E_1(x,g) \oplus E_2(x,g) \oplus \cdots \oplus E_{r(x,g)}(x,g) \oplus E_{\alpha}(x,g)$$
such that
\begin{itemize}
\item[(a)] for each $j=1,2,\dots, r(x,g)$, $\dim E_j(x,g) = m_j(x,g)$ is finite, $d_x g E_j(x,g) = E_j(g x, g)$, and for any $v \in E_j(x,g) \setminus \{0\}$, we have
$$
\lambda_j(x,g) = \lim_{n \to \infty} \frac{1}{n} \log \|d_x g^n v\| ;
$$
\item[(b)] the distribution $E_{\alpha}(x,g)$ is closed and finite-codimensional, satisfies $d_x g E_{\alpha}(x,g) \subset E_{\alpha}(gx,g)$ and
$$
\lambda_\alpha \geq \limsup_{n \to \infty} \frac{1}{n} \log \|d_x g^n |_{E_{\alpha}(x,g)}\|  \ ;
$$

\item [(c)] for $p\leq \Sigma_{i=1}^{r(x,g)}m_{i}(x,g)$,  we have
$$\lim_{n\rightarrow \infty}\frac{1}{n}\log V_p(d_x g^n)=\sum_{k=1}^{p}\widetilde{\lambda}_{k}(
    x,g),$$
where $\{\widetilde{\lambda}_{k}(x,g)\}$ are $\lambda_j(
    x,g)$'s repeated with multiplicity $m_j(x,g)$;

\item[(d)] the mappings $x \mapsto E_j(x,g), x \mapsto E_{\alpha}(x,g)$ are measurable,
\item[(e)] writing $\pi_j(x,g)$ for the projection of $X$ onto $E_j(x,g)$ via the splitting at $x$,
we have
$$
\lim_{n \to \infty} \frac{1}{n} \log |\pi_j(g^n x,g)| = 0 \quad a. s.
$$
\end{itemize}
\end{theorem}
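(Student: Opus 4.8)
The plan is to realize the derivative cocycle $d_x g^n = d_{g^{n-1}x}g\circ\cdots\circ d_x g$ over the measure-preserving system $(K,\mu,g)$ and to read off its Lyapunov structure from the asymptotics of the volume functions $V_p$ introduced in \eqref{tiji}. The first step is to observe that $\log V_p$ is subadditive along the cocycle: since $V_p(T)$ coincides with the product $\sigma_1(T)\cdots\sigma_p(T)$ of the $p$ largest singular values, the Horn-type submultiplicativity $V_p(T\circ S)\le V_p(T)\,V_p(S)$ holds, and $\log^{+}\|d_x g\|$ is bounded, hence integrable, because $g$ is $C^1$ on the compact invariant set $K$. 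Kingman's subadditive ergodic theorem then yields, for each $p\in\mathbb{N}$, the $\mu$-a.e. limit
$$
\Lambda_p(x):=\lim_{n\to\infty}\frac{1}{n}\log V_p(d_x g^n),
$$
which is exactly assertion (c). The candidate exponents are recovered as the successive increments $\widetilde{\lambda}_p(x,g)=\Lambda_p(x)-\Lambda_{p-1}(x)$; grouping equal values in decreasing order produces the distinct numbers $\lambda_j(x,g)$, and the number of repetitions of each is its multiplicity $m_j(x,g)$.

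The next step is to use the index of compactness as a cutoff. The submultiplicativity \eqref{e-2.1} of $\|\cdot\|_\alpha$ controls how much volume can be lost to non-compactness, and the strict inequality $\lambda_\alpha>l_\alpha(x,g)$ forces the increments $\widetilde{\lambda}_p(x,g)$ to fall below $\lambda_\alpha$ once $p$ exceeds a finite value $\sum_i m_i(x,g)$. This gives the finiteness in (a): only finitely many exponents exceed $\lambda_\alpha$, each with finite multiplicity. I would then build the Oseledets filtration
$$
X=F_1(x)\supset F_2(x)\supset\cdots\supset F_{r(x,g)+1}(x),
$$
where $F_j(x)=\{v:\ \limsup_n \tfrac1n\log\|d_x g^n v\|\le\lambda_j(x,g)\}$ collects the slowly growing directions, and identify the tail $F_{r(x,g)+1}(x)$ with $E_\alpha(x,g)$. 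Closedness and finite codimension of $E_\alpha$ follow from the cutoff at $\lambda_\alpha$, and the inclusion $d_x g\,E_\alpha(x,g)\subset E_\alpha(gx,g)$ together with the upper bound on its growth is immediate from the definition, which establishes (b).

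To promote the filtration to the genuine direct-sum splitting in (a), I would invoke the injectivity hypotheses (H)(i)--(ii). Since $g$ and every $d_x g$ are injective, the cocycle has a well-defined backward direction; running the analogous volume-growth argument for the adjoint (equivalently, inverse) cocycle produces a complementary fast-growing filtration $G_j(x)$ with $d_x g\,G_j(x)=G_j(gx)$. Setting $E_j(x,g):=F_j(x)\cap G_j(x)$ yields the finite-dimensional, $d_x g$-invariant subspaces on which the growth rate is exactly $\lambda_j(x,g)$ both forward and backward, so the $\limsup$ improves to the honest limit asserted in (a). Measurability in (d) is then inherited, since each $E_j(x,g)$ arises as a measurable limit of ranges of spectral projections of $(d_x g^n)^{*}\,d_x g^n$, whence the projections $\pi_j(x,g)$ are measurable. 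Finally, (e) is a Pesin tempering-kernel argument: the measurability of $x\mapsto\pi_j(x,g)$ combined with $g$-invariance of $\mu$ forces $\tfrac1n\log|\pi_j(g^n x,g)|\to 0$ almost surely.

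The main obstacle, and the point where infinite-dimensionality genuinely bites, is proving that only finitely many exponents lie above $\lambda_\alpha$ with finite multiplicities and that $E_\alpha$ is closed of finite codimension. In finite dimensions this is automatic from exterior-algebra dimension counts, but here it must be extracted quantitatively from the subadditivity of $\|\cdot\|_\alpha$ and the strict gap $\lambda_\alpha>l_\alpha(x,g)$. The second delicate point is constructing a splitting rather than a mere filtration: without compactness of the unit ball one cannot simply take orthogonal complements, so the injectivity hypotheses are indispensable for running the cocycle backward, and controlling the angles between $F_j(x)$ and $G_j(x)$ (equivalently, the sub-exponential boundedness of the projections) is the technical heart of the argument and is precisely what feeds into (e).
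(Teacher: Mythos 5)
Your proposal cannot be compared against an in-paper argument, because the paper does not prove this statement at all: it is quoted as an external input from Lian--Lu \cite{LL} (Theorem 3.1 there), and the present paper's ``proof'' is the citation. Measured against the actual proof in that memoir (which descends from Ruelle's Hilbert-space MET and the work of Ma\~n\'e and Thieullen), your outline does follow the same broad strategy: Kingman's subadditive ergodic theorem applied to $\log V_p(d_xg^n)$, exponents recovered as the increments $\Lambda_p-\Lambda_{p-1}$, the index of compactness $\|\cdot\|_\alpha$ used as a spectral cutoff, and a forward slow filtration intersected with a backward fast family to upgrade to a splitting. So the road map is the right one.

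The difficulty is that the write-up defers or gets wrong exactly the points that constitute the theorem. (i) The finiteness package in (a)--(b) (finitely many exponents above $\lambda_\alpha$, finite multiplicities, $E_\alpha$ closed and of finite codimension) does not follow formally from submultiplicativity of $\|\cdot\|_\alpha$ together with the gap $\lambda_\alpha>l_\alpha(x,g)$; it requires a quantitative link between the singular values (equivalently the increments of $\log V_p$) of $d_xg^n$ and the Kuratowski measure of noncompactness, and this occupies a substantial part of \cite{LL}. You correctly flag it as ``the main obstacle'' but give no argument for it, so (a) and (b) are assumed rather than proved. (ii) The splitting step contains an actual error: you propose running ``the adjoint (equivalently, inverse) cocycle''. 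Adjoint and inverse are not equivalent, and under (H) the operators $d_xg$ are injective but in general not surjective, so no inverse operator cocycle exists; what injectivity of $g$ on $K$ (with $g(K)=K$) gives is only the ability to pull the \emph{base point} backward, and the fast spaces must then be constructed as limits of images $d_{g^{-n}x}g^n(V_n)$ with uniform angle control --- this graph-transform/angle estimate is the technical heart of \cite{LL}, and intersecting your $F_j$ with a hypothetical $G_j$ presupposes it. (iii) Conclusion (e) does not follow from measurability plus invariance alone: for a merely measurable, a.e.-finite function $\varphi$, one need not have $\frac1n\varphi(g^nx)\to0$ a.s.; one needs either integrability of $\log|\pi_j|$ (after which Birkhoff's theorem applies) or the angle estimates from step (ii). As it stands, the proposal is a correct high-level outline whose assertions (a), (b), (e) are each reduced to precisely the estimates that are missing.
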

%Now we give more assumptions on the generators.
%\begin{itemize}
%\item[(H3)] We assume
%$$l_{\alpha}(f_i):=\lim_{n\to\infty}\frac{1}{n} \log \sup_{x\in K}\|d_xf_i^n\|_\alpha \ < \ 0$$

%\vspace{-3 pt}
%\item[(H4)] $E^u(x,f_1)=E^u(x,f_2)$
%for any $x\in K$.

%\vspace{-3 pt}
%\item[(H5)] $f_i : K\mapsto K$ is a homeomorphism.
%\vspace{-3 pt}
%\item[(H5)] $(f_i,\mu_i)$ has no zero Lyapunov exponents.
%\vspace{-3 pt}
%\item[(H6)] The set $K$ in (H2) has finite box-counting dimension.
%\end{itemize}

In order to get SRB measures, it is necessary to put some restrictions
on $g$. Under above setting, it is true by
seeing Theorem \ref{MET} that $l_{\alpha}(g)<0$ implies the
existence of Lyapunov exponents $\lambda_1(x, g)>\lambda_2(x, g)>\cdots$ with multiplicities $m_1(x, g
), m_2(x, g),\cdots$ at $\mu$-a.e. $x$, which can be
infinitely many but only admits finitely many positive ones.

\begin{theorem}\cite[Theorem 1.1]{l1} Suppose $g$ is $C^{2}$ Fr\'echet differentiable, satisfies (H) and $l_{\alpha}(g)<0$. $\mu$ is supported on $K$ with $h_{\mu}(g)<\infty$, where $K$ is a compact invariant set. If $\mu$ is an SRB measure, then
\begin{equation*}
h_{\mu}(g) = \int \sum_{\lambda_j(x, g)>0} m_j(x, g) \lambda_j(x, g)d\mu.
\end{equation*}
The converse is also true if $(g,\mu)$ has no zero Lyapunov exponents and the set $K$ has finite box-counting dimension.
\end{theorem}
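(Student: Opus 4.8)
The plan is to prove the equality by establishing the two opposite inequalities separately: the forward (Ruelle) inequality holds for every invariant measure, while the reverse inequality is exactly where the SRB hypothesis enters. First I would invoke Theorem~\ref{MET}: since $l_\alpha(g)<0$ we may choose $\lambda_\alpha<0$, so the Oseledets splitting $X=E_1(x,g)\oplus\cdots\oplus E_{r(x,g)}(x,g)\oplus E_\alpha(x,g)$ has only finitely many positive exponents and the unstable bundle $E^u(x,g):=\bigoplus_{\lambda_j(x,g)>0}E_j(x,g)$ is \emph{finite-dimensional}, with integrable dimension. This finite-dimensionality of the unstable directions is what makes the entropy formula tractable in infinite dimensions; the stable and center directions, though infinite-dimensional and living inside the finite-codimensional $E_\alpha(x,g)$, do not contribute to the entropy.

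For the bound $h_\mu(g)\le\int\sum_{\lambda_j(x,g)>0}m_j(x,g)\lambda_j(x,g)\,d\mu$ I would reproduce the infinite-dimensional form of Ruelle's estimate: cover $K$ by finitely many small balls, iterate, and use that under $d_xg^n$ the maximal growth of a $p$-dimensional parallelepiped is governed by $V_p(d_xg^n)$, whose exponential rate is $\sum_{k=1}^p\widetilde\lambda_k(x,g)$ by part (c) of Theorem~\ref{MET}. Taking $p$ equal to the unstable dimension and using $l_\alpha(g)<0$ to discard the contribution of the asymptotically compact tail $E_\alpha(x,g)$, the number of $(n,\varepsilon)$-separated orbits grows no faster than the unstable Jacobian, which gives the bound after integrating and letting $\varepsilon\to 0$.

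The substantive direction is $h_\mu(g)\ge\int\sum_{\lambda_j(x,g)>0}m_j(x,g)\lambda_j(x,g)\,d\mu$ under the SRB hypothesis. Here I would first build local unstable manifolds $W^u_{\mathrm{loc}}(x)$ tangent to $E^u(x,g)$ by a graph-transform/Pesin argument adapted to the $C^2$ infinite-dimensional setting, these manifolds being finite-dimensional because $E^u$ is. Next I would choose a measurable partition $\xi$ subordinate to the unstable foliation, i.e.\ with $\xi(x)\subset W^u_{\mathrm{loc}}(x)$ containing a relative neighbourhood of $x$ and satisfying $g^{-1}\xi\ge\xi$. Rokhlin's formula then gives $h_\mu(g)=H_\mu(g^{-1}\xi\mid\xi)$, and expressing this conditional entropy through the conditional measures $\{\mu^\xi_x\}$ on the atoms, the SRB hypothesis, namely that each $\mu^\xi_x$ is absolutely continuous with respect to the induced Riemannian volume on $W^u(x)$, lets me identify the conditional entropy with $\int\log\mathrm{Jac}^u(x)\,d\mu$, where $\mathrm{Jac}^u(x)=|\det(d_xg|_{E^u(x,g)})|$. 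Finally, Oseledets multiplicativity (again part (c) of Theorem~\ref{MET} applied to the top exterior power of the finite-dimensional bundle $E^u$) yields $\int\log\mathrm{Jac}^u\,d\mu=\int\sum_{\lambda_j(x,g)>0}m_j(x,g)\lambda_j(x,g)\,d\mu$, completing the equality.

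For the converse, that equality forces $\mu$ to be SRB, I would run the Ledrappier--Young dimension argument: the assumption of no zero exponents gives a clean hyperbolic splitting $X=E^u\oplus E^s$ with $E^s$ infinite-dimensional, and finite box-counting dimension of $K$ keeps the pointwise dimensions of the conditional measures finite and well-behaved. One shows that equality in Ruelle's inequality forces the transverse dimension of $\mu^\xi_x$ to be full, equal to $\dim E^u(x,g)$, and an entropy-versus-dimension comparison upgrades this to absolute continuity, i.e.\ the SRB property. The main obstacle throughout is the step identifying $H_\mu(g^{-1}\xi\mid\xi)$ with $\int\log\mathrm{Jac}^u\,d\mu$: it rests on absolute continuity of the unstable conditional measures together with uniform distortion and bounded-holonomy estimates, and in infinite dimensions these must be established carefully, since the holonomy maps move along the infinite-dimensional stable directions and the classical finite-dimensional volume comparisons do not transfer verbatim. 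This is precisely where the $C^2$ hypothesis is needed (to control distortion) and where $l_\alpha(g)<0$ and the finite codimension of $E_\alpha(x,g)$ must be exploited to replace the missing finite-dimensional volume structure.
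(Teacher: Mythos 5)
This theorem is imported from \cite[Theorem 1.1]{l1} and is not proved in the present paper at all, so the only fair comparison is with that source and with the paper's proof of its random analogue, Theorem C. Your proposal follows essentially the same route as both: Ruelle's inequality (via covering-number/volume-growth estimates with $l_{\alpha}(g)<0$ controlling the non-compact tail $E_{\alpha}$) for the upper bound; for the lower bound a measurable partition subordinate to unstable manifolds, the identification of $H_{\mu}(g^{-1}\xi\mid\xi)$ with $\int\log|\det(d_xg|_{E^u(x,g)})|\,d\mu$ using absolute continuity of the conditional measures, and exterior-power/Oseledets multiplicativity to convert the unstable Jacobian into $\sum_{\lambda_j>0}m_j\lambda_j$; and the Ledrappier--Young dimension argument for the converse under the no-zero-exponents and finite box-counting dimension hypotheses.
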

\subsection{Random Transformations with Finite Commuting Generators}
\subsubsection{Multiplicative Ergodic Theorem for $\mathbb{N}^2$-actions}
We now consider $\mathbb{N}^N$ actions generated by commuting maps $\mathfrak{F}=\{f_i: X \to X\}_{i=1,\ldots,N}$, in which $f_i\circ f_j=f_j\circ f_i$ for all $1\leq i,j\leq N$. For
simplicity of the notations, we assume $N=2$. We denote by $\mathcal{M}_i$ (resp. $\mathcal{M}_i^e$) the set of (resp. ergodic) Borel probability measures on $X$ which are invariant under $f_i$, for any $i=1,2$. Let $\mathcal{M}=\mathcal{M}_1\cap\mathcal{M}_2$ and $\mathcal{M}^e=\mathcal{M}_1^e\cap\mathcal{M}_2^e$. By \cite[Proposition 1.3, 1.4]{Hu}, $\mathcal{M}\neq\emptyset$ and $\mathcal{M}^e\neq\emptyset$. Our first result of Multiplicative Ergodic Theorem for infinite dimensional $\mathbb{N}^2$-actions will prove extremely useful in the next section.
We need the following assumptions on generators which will be needed throughout the paper.
\begin{h0}
\ \\
(i) $f_1$, $f_2$ are $C^{1}$ Fr\'echet differentiable and injective;\\
(ii) the derivatives of $f_1$ and $f_2$ are also injective;\\
(iii) there exists a compact set $K\subset X$ such that $f_1(K)=K$ and $f_2(K)=K$.
\end{h0}
\begin{the1}\label{thm1}
Suppose $f_1$, $f_2$ satisfy (H0) and $\mu\in \mathcal{M}$ with supp$\mu\subset K$. For any $\lambda_{\alpha} >\max\{ l_{\alpha}(f_1),l_{\alpha}(f_2)\}$, there exists a measurable set $\Gamma\subset \Gamma_{f_i}$ with $f_i(\Gamma)=\Gamma$ for each $i=1,2$ and $\mu(\Gamma)=1$, such that for any $x\in \Gamma$, there exists %at most
%finitely many real numbers $\lambda_j(f_i,x)$ with $\lambda_{r(f_i,x)}(f_i,x)> \lambda_{\alpha}$
a decomposition of $X$ into at most finitely many subspaces
$$
X=\bigoplus_{j_1=1}^{r(x,f_1)}
\bigoplus_{j_2=1}^{r(x,f_2)}E_{j_1,j_2}(x)
\bigoplus_{j_2=1}^{r(x,f_2)}E_{r(x,f_1)+1,j_2}(x)
\bigoplus_{j_1=1}^{r(x,f_1)}E_{j_1,r(x,f_2)+1}\bigoplus E_{\alpha}(x)
$$
satisfying the following properties:
\begin{itemize}

\item[(a)] if $E_{j_1,j_2}(x)\neq\{0\}$, for any $s_1,s_2\in \mathbb{Z}^+$, $0\neq v\in E_{j_1,j_2}(x)$, $1\leq j_i \leq r(x,f_i)$ and $i=1,2$,
\begin{equation}\label{0}
\lim_{n\longrightarrow \infty}\frac{1}{n}\log\|d_x (f_1^{s_1}\circ f_2^{s_2})^nv\|=s_1\lambda_{j_1}(x,f_1)+s_2\lambda_{j_2}(x,f_2);
\end{equation}

\item[(b)] if $E_{j_1,r(x,f_2)+1}(x)\neq\{0\}$, for $0\neq v\in E_{j_1,r(x,f_2)+1}(x)$, $1\leq j_1 \leq r(x,f_1)$,
$$
\lim_{n\longrightarrow \infty}\frac{1}{n}\log\|d_x f_1^nv\|=\lambda_{j_1}(x,f_1)
$$
and
$$
\limsup_{n\longrightarrow \infty}\frac{1}{n}\log\|d_x f_2^nv\|\leq\lambda_{\alpha};
$$

\item[(c)] if $E_{r(x,f_2)+1, j_2}(x)\neq\{0\}$, for $0\neq v\in E_{r(x,f_2)+1, j_2}(x)$, $1\leq j_2 \leq r(x,f_2)$,
$$
\lim_{n\longrightarrow \infty}\frac{1}{n}\log\|d_x f_2^nv\|=\lambda_{j_2}(x,f_2)
$$
and
$$
\limsup_{n\longrightarrow \infty}\frac{1}{n}\log\|d_x f_1^nv\|\leq\lambda_{\alpha};
$$

\item[(d)] for $0\neq v\in E_{\alpha}(x)$ and $i=1,2$,
$$
\lim_{n\longrightarrow \infty}\frac{1}{n}\log\|d_x f_i^nv\|\leq\lambda_{\alpha};
$$

\item[(e)]for $1\leq j_i \leq r(x,f_i)$ and $i=1,2$, each $E_{j_1,j_2}(x)$, we have the following invariance properties:

$df_i(x)E_{j_1,j_2}(x)=E_{j_1,j_2}(f_i(x))$ and $\lambda_{j_i}(f_{i'}(x),f_i)=\lambda_{j_i}(x,f_i)$,
where $1\le i,i'\le 2$;

\item[(f)]for $1\leq j_i \leq r(x,f_i)$ and $i=1,2$, each $E_{r(x,f_1)+1,j_2}(x)$ and $E_{j_1,r(x,f_2)+1}(x)$ we have the following invariance properties:

$df_1(x)E_{r(x,f_1)+1,j_2}(x)\subset E_{r(x,f_1)+1,j_2}(f_1(x))$, $df_2(x)E_{j_1,r(x,f_2)+1}(x)\subset E_{j_1,r(x,f_2)+1}(f_2(x))$  $df_2(x)E_{r(x,f_1)+1,j_2}(x)= E_{r(x,f_1)+1,j_2}(f_2(x))$, $df_1(x)E_{j_1,r(x,f_2)+1}(x)= E_{j_1,r(x,f_2)+1}(f_1(x))$;

\item[(g)]writing $\pi_{j_1,j_2}(x)$ for the projection of $X$ onto $E_{j_1,j_2}(x)$ via the splitting at $x$, for every $i=1,2$,
we have
\begin{equation}\label{01}
\lim_{n \to \infty} \frac{1}{n} \log |\pi_{j_1,j_2}(f_i^n x)| = 0 \quad a. s.
\end{equation}
\end{itemize}
\end{the1}

\subsubsection{Random Transformations with Finite Commuting Generators}
Again, without loss of generality, we consider random transformations generated by two maps $\mathfrak{F}=\{f_1,f_2\}$ with $f_1\circ f_2=f_2\circ f_1$.

%\begin{itemize}
%\item[(H0)] $f_1\circ f_2=f_2\circ f_1$.

%\vspace{-3 pt}
%\item[(H1)] (i) $f_i$ is $C^{2}$ Fr\'echet differentiable and injective;

%\vspace{-3 pt}
%(ii) the derivative of $f_i$ at $x \in X$, denoted $d_x f_i$, is also injective.

%\vspace{-3 pt}
%\item[(H2)] (i) $f_i$ leaves invariant a compact set $K \subset X$, with $f_i(K)=K$.
%\end{itemize}

Let $\Omega=\mathfrak{F}^{\mathbb{N}}=\prod_{0}^\infty \mathfrak{F}$
be the infinite product of $\mathfrak{F}$, endowed with the product topology and the product Borel $\sigma$-algebra $\mathcal{A}$,
and let $\theta$ be the left shift operator on $\Omega$ which is defined by $(\theta\omega)_n=\omega_{n+1}$ for $\omega=(\omega_n)\in \Omega$. Given $\omega=(\omega_n)\in \Omega$,
we write $f_{\omega}=\omega_0$ and
$$
f(n, \omega):=\left\{\begin{array}{ll}
f_{\theta^{n-1}\omega}\circ\cdots\circ f_{\theta\omega}\circ f_{\omega} \quad &\;\;n>0\\
id \quad &\;\;n=0.
\end{array}\right.
$$
There is a natural skew product transformation $F: \Omega\times X\longrightarrow \Omega\times X$ over $(\Omega, \theta)$ which is defined by $F(\omega, x)=(\theta\omega, f_{\omega}(x)).$
For any probability measure $\nu$ on $\mathfrak{F}$, we can define a probability
measure $\mathbf{P}_{\nu}=\nu^{\mathbb{N}}$ on $\Omega$ which is invariant with respect
to $\theta$. By the induced \emph{finitely generated (i.i.d.) random transformation $f$ over $(\Omega, \mathcal{A}, \mathbf{P}_{\nu}, \theta)$} we  mean the system generated by the randomly composition of $f_{i}$, $i=1,2$ in the law of $\nu$.  We are interested in dynamical behaviors of these actions
for $\mathbf{P}_{\nu}$-a.e. $\omega$ or on the average on $\omega$.
%%Let $X$ be a
%separable infinite dimensional Hilbert space with inner product
%$<\cdot, \cdot>$, norm $\|\cdot\|$, distance function $d$ and
%$\sigma$-algebra $\mathcal{B}$ of Borel sets. Let $(\Omega,\mathcal{F})$ be a countably generated probability space with a measurable flow $(\theta^t)_{t\in\mathbb{R}}$ on it preserving an ergodic probability measure $\mathbf{P}$.
%\begin{definition}
%A map $$\phi:\mathbb{R}^+\times \Omega\times X\longrightarrow X$$
%$$(t,\omega,x)\mapsto \phi(t,\omega)x$$ is called a (measurable) \emph{stochastic flow} on $X$ over $(\Omega,
%\mathcal{F}, \mathbf{P}, \theta^t)$,if \par(1) $\phi$ is
%$(\mathcal{B}(\mathbb{R}^+)
%\otimes\mathcal{F}\otimes\mathcal{B}(X),\mathcal{B}(X))$-measurable;
%\par (2) The maps $\phi(t,\omega):X \rightarrow X$ form a cocycle
%over $\theta$,i.e., they satisfy $\phi(0,\omega)=id$,
%$\phi(t+s,\omega)=\phi(t,\theta^{s}\omega)\circ\phi(s,\omega)$ for
%all $s, t\in \mathbb{R}^+,\omega\in\Omega.$\par
%\end{definition}
%Moreover, for $t\in \mathbb{R}^+$, we call
It is clear that $f(n,\omega)$ is injective and strongly measurable for any $n\in\mathbb{N}$, (in the sense for each $x\in X$
fixed, the map $\omega\mapsto f(n,\omega)(x)$ is measurable from $\Omega$ to $X$).

A Borel probability measure $\mu$ on $X$ is called \emph{$f$-invariant} if
$
\int_{\Omega} \mu(f_{\omega}^{-1}A)d\mathbf{P}_{\nu}(\omega)=\mu(A)
$
for all Borel $A\subset X$. We denote by $\mathcal{M}_f$ (resp. $\mathcal{M}_f^e$) the set of all $f$-invariant (resp. ergodic) Borel probability measures. Clearly, $\mathcal{M}\subset\mathcal{M}_f$ and $\mathcal{M}^e\subset\mathcal{M}_f^e$.

%\subsection{\hspace*{-0.15in}. Metric entropy and Lyapunov exponents}\label{lyapunov1}
%\begin{definition}For a finite or countable Borel partition $\mathcal{P}$ of $\Omega\times X$, $\mu\in\mathcal{M}_f$, the limit
%\begin{equation*}
%h_{\mu}(f,\mathcal{P}):=\lim_{n\rightarrow\infty}\frac{1}{n}\int_{\Omega}
%H_{\mu}
%\big(\bigvee_{i=0}^{n-1}(f(\omega,i)^{-1}\mathcal{P^{\omega}}
%\big)\;d\mathbf{P}_{\nu}(\omega),
%\end{equation*}
%where $H_{\mu}(\mathcal{Q}):=-\sum\limits_{A\in\mathcal{Q}}\mu(A)\log\mu(A)$ for a finite or countable
%partition $\mathcal{Q}$ of $X$, exists. The number
%$$
%h_{\mu}(f):=\sup_{\mathcal{P}}h_{\mu}(f,\mathcal{P}),
%$$
%where $\mathcal{P}$ ranges over all finite  or countable partitions of $\Omega\times X$, is called the
%\emph{measure-theoretic entropy} of $f$.
%\end{definition}

For each $\omega\in \Omega$,
using (\ref{e-2.1}), we see that for  $x\in
K$, $m, n\in \mathbb{N}$,
\begin{equation*}\log \|d_x f(
n+m,\omega)\|_{\alpha}\leq \log \|d_{x}f(n,\omega)\|_{\alpha}+\log \|d_{f(n,
\omega)x}f(n,\theta^n\omega)\|_{\alpha}.\end{equation*} This  gives the
existence of the limits
$$
l_{\alpha}(\omega, x):=\lim\limits_{n\rightarrow
\infty}\frac{1}{n}\log \|d_x f(n, \omega)\|_{\alpha}
$$ and
$$
l_{\alpha}(\omega, f):=\lim_{n\to\infty}\frac{1}{n} \log \sup_{x\in K}\|d_xf(n,\omega)\|_\alpha,
 $$where $K\subset X$ is a compact set such that $f_1(K)=K$ and $f_2(K)=K$.
 By Random Subadditive Ergodic Theorem \cite[Theorem 2.2]{K1}, one can show that $l_{\alpha}(\omega, x)$ and $l_{\alpha}(\omega, f)$ are non-random in the sense that there is a measurable, $f$-invariant set
$\Gamma_0\subset X$
with $\mu(\Gamma_{0})=1$ such that for $\mathbf{P}_{\nu}$-a.e $\omega\in \Omega$, $l_{\alpha}(\omega, x)=l_{\alpha}(x)$ and $l_{\alpha}(\omega, f)=l_{\alpha}(f)$ for any $x\in \Gamma_0$.
For the Lyapunov exponents for random transformations, we only present the part which is adequate for our purposes.

\begin{theorem}\label{thm2.1}\cite[Theorem 3.1]{LL}
  Let $f$ be a finitely generated random transformation on an infinite dimensional Banach
  space $X$ over $(\Omega, \mathcal{A}, \mathbf{P}_{\nu}, \theta)$ by $f_1$ and $f_2$. Suppose $\mu\in \mathcal{M}$ and $f_1$, $f_2$ satisfy (H0).
   For any $\lambda_{\alpha} > l_{\alpha}(f)$, there is a measurable, $f$-invariant set
$\Gamma_f\subset X$
with $\mu(\Gamma_{f})=1$. For any $x\in \Gamma_{f}$, there are at most finitely many real numbers
$$\lambda_1(x,f) > \lambda_2(x,f) > \cdots > \lambda_{r(x,f)}(x,f)$$
with $\lambda_{r(x,f)}(x,f)> \lambda_{\alpha}$ for which the following properties hold.
For $\mathbf{P}_{\nu}$-a.e $\omega\in \Omega$, there is a splitting
$$X = E_1(\omega, x,f) \oplus E_2(\omega, x,f) \oplus \cdots \oplus E_{r(x,f)}(\omega, x,f) \oplus E_{\alpha}(\omega, x,f)$$
such that
\begin{itemize}
\item[(a)] for each $j=1,2,\dots, r(x,f)$, $\dim E_j(\omega, x,f) = m_j(x,f)$ is finite, $d_x f_{\omega} E_j(\omega, x,f) = E_j(F(\omega,x), f)$, and for any $v \in E_j(\omega, x,f) \setminus \{0\}$, we have
$$
\lambda_j(x,f) = \lim_{n \to \infty} \frac{1}{n} \log \|d_x f(n,\omega)v\|;
$$
\item[(b)] the distribution $E_{\alpha}$ is closed and finite-codimensional, satisfies $d_x f_{\omega} E_{\alpha}(\omega, x,f) \subset E_{\alpha}(F(\omega,x), f)$ and
$$
\lambda_\alpha \geq \limsup_{n \to \infty} \frac{1}{n} \log \|d_x f(n,\omega) |_{E_{\alpha}(\omega, x,f)}\|  \ ;
$$
\item [(c)] for $p\leq \Sigma_{j=1}^{r(x,f)}m_j(x,f)$,  we have
$$\lim_{n\rightarrow \infty}\frac{1}{n} \log V_p(d_x f(n, \omega))=\sum_{k=1}^{p}\widetilde{\lambda}_{k}(
    x,f),$$
 where $\{\widetilde{\lambda}_k(x,f)\}$ are $\lambda_j(
    x,f)$'s repeated with multiplicity $m_j(x,f)$;

\item[(d)] the mappings $(\omega,x) \mapsto E_j(\omega, x,f), (\omega,x)\mapsto E_{\alpha}(\omega, x,f)$ are measurable,
\item[(e)] writing $\pi_j(\omega, x,f)$ for the projection of $X$ onto $E_j(\omega, x,f)$ via the splitting at $x$,
we have
$$
\lim_{n \to \infty} \frac{1}{n} \log |\pi_j(F^n(\omega,x), f)| = 0 \quad a. s.
$$
\end{itemize}
\end{theorem}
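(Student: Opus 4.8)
The plan is to recognize the derivative of the random transformation as a measurable linear cocycle over a single measure-preserving transformation, and then to invoke the infinite-dimensional Multiplicative Ergodic Theorem of Lian--Lu in its cocycle form (\cite[Theorem 3.1]{LL}, the random analogue of Theorem \ref{MET}). First I would pass to the skew product $F(\omega,x)=(\theta\omega,f_\omega(x))$ on $\Omega\times X$ and note that the definition of an $f$-invariant measure is made precisely so that $\mathbf{P}_\nu\times\mu$ is $F$-invariant; this is the base system carrying the cocycle. The generator is $(\omega,x)\mapsto d_xf_\omega\in L(X,X)$, and the chain rule identifies its $n$-fold product along the orbit with $d_xf(n,\omega)$. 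By (H0)(i)--(ii) each $d_xf_\omega$ is injective, hence so is every composition $d_xf(n,\omega)$; and the strong measurability of $\omega\mapsto f(n,\omega)(x)$ recorded above furnishes the joint measurability of the cocycle in $(\omega,x)$ demanded by the theorem.

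Next I would check the one hypothesis controlling the noncompact part of the spectrum. The subadditivity $\log\|d_xf(n+m,\omega)\|_\alpha\le\log\|d_xf(n,\omega)\|_\alpha+\log\|d_{f(n,\omega)x}f(m,\theta^n\omega)\|_\alpha$ established before the statement, combined with the Random Subadditive Ergodic Theorem \cite[Theorem 2.2]{K1}, gives the non-random limit $l_\alpha(\omega,x)=l_\alpha(x)\le l_\alpha(f)$; thus the standing choice $\lambda_\alpha>l_\alpha(f)$ guarantees $\lambda_\alpha>l_\alpha(\omega,x)$ for almost every $(\omega,x)$, which is exactly the index-of-compactness condition under which the cocycle MET applies. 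Feeding the cocycle into that theorem then produces, for $\mathbf{P}_\nu$-a.e.\ $\omega$, the splitting $X=\bigoplus_{j=1}^{r(x,f)}E_j(\omega,x,f)\oplus E_\alpha(\omega,x,f)$ together with all of (a)--(e): the finiteness of $\dim E_j$, the equivariance $d_xf_\omega E_j(\omega,x,f)=E_j(F(\omega,x),f)$, the exponential rates, the bound on $E_\alpha$, the volume-growth formula, the measurability, and the subexponential growth of the projection norms. Each of these is the verbatim output of the theorem applied to the present generator, so at this stage nothing beyond transcription is required.

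The hard part will be the non-randomness bookkeeping: showing that the exponents $\lambda_j(x,f)$, their number $r(x,f)$, and the multiplicities $m_j(x,f)$ depend on $x$ alone, even though the Oseledets subspaces $E_j(\omega,x,f)$ genuinely depend on $(\omega,x)$. Here I would apply the Random Subadditive Ergodic Theorem to $\phi_n(\omega,x)=\log V_p(d_xf(n,\omega))$, which is subadditive along $F$ because $V_p$ coincides with the $p$-th exterior-power norm $\|\wedge^p(\cdot)\|$ and is therefore submultiplicative under composition; its limit $\Phi_p$ exists almost everywhere and is $F$-invariant. The crux is that the i.i.d.\ law $\mathbf{P}_\nu=\nu^{\mathbb N}$ renders this $F$-invariant limit $\mathbf{P}_\nu$-essentially independent of $\omega$, the place where the product structure of the base is indispensable; granting this, $\Phi_p(\omega,x)=\Phi_p(x)$, and reading the individual exponents off the successive differences $\Phi_p-\Phi_{p-1}$ exhibits $\lambda_k(x,f)$, hence also $r(x,f)$ and $m_j(x,f)$, as measurable functions of $x$. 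Intersecting the resulting full-measure sets over the countably many $p$ and discarding the exceptional null sets yields the single measurable, $f$-invariant set $\Gamma_f$ on which every assertion holds, and the joint measurability in (d) is then routine to re-verify.
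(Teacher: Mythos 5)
Your proposal is correct and matches the paper's treatment: the paper does not prove this statement either, but obtains it exactly as you do, by viewing $(\omega,x)\mapsto d_xf_\omega$ as a cocycle over the skew product $F$ and quoting the Lian--Lu Multiplicative Ergodic Theorem \cite[Theorem 3.1]{LL}, with the non-randomness of $l_\alpha$ and of the exponents handled via the Random Subadditive Ergodic Theorem \cite[Theorem 2.2]{K1} and the i.i.d.\ product structure of $\mathbf{P}_\nu$, just as in your last paragraph. If anything, your write-up supplies more of the reduction (chain rule, injectivity, measurability, and the exterior-power bookkeeping) than the paper records.
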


By \cite[Theorem 2.7]{l1}, for $\mathbf{P}_{\nu}\times\mu$-a.e. $(\omega, x)$ the unstable set
\[
W^u(\omega,x):=\{y\in X:\ \limsup_{n\rightarrow +\infty}\frac{1}{n}\log
d(f(-n,\omega)x, f(-n,\omega)y)<0\}
\]
is a $C^{1,1}$ immersed Hilbert manifold of $X$, the so called
\emph{unstable manifold at $(\omega, x)$}. A measurable partition $\eta$
of $\Omega\times X$ is  \emph{subordinate to $W^u$
manifolds} of $(f, \mu)$, if for $\mathbf{P}_{\nu}\times\mu$-a.e. $(\omega,
x)$, denote by $\eta(\omega,x)$ the element of $\eta$ that contains $(\omega,
x)$, then
\[
\eta^{\omega}(x):=\{y:\ (w, y)\in \eta(\omega, x)\}\subset W^u(\omega, x)
\]
and $\eta^{\omega}(x)$ contains an open neighborhood of $x$ in $W^u(\omega,x)$, this
neighborhood being taken in the submanifold topology of $W^u(\omega,x)$.
A Borel probability measure $\mu$ is said to have
\emph{absolutely continuous conditional measures on $W^u$-manifolds}
of $(f, \mu)$, if for any measurable partition
$\eta$ subordinate to $W^u$-manifolds of the system, one has
$\mu^{\eta^{\omega}(x)}\ll \mbox{Leb}_{(\omega,x)}^{u}$, $\mathbf{P}_{\nu}\times\mu$-a.e., where
$\{\mu^{\eta^{\omega}(x)}\}_{x\in K}$ is a canonical system of the
conditional measures of $\mu$ associated with the partition
$\{\eta^{\omega}(x)\}_{x\in K}$ of $X$ and $\mbox{Leb}_{(\omega,x)}^{u}$ is
the Lebesgue measure on $W^u(\omega, x)$ induced by its inherited
Riemannian metric as a submanifold of $X$. We call such measure an \emph{SRB measure}. Similarly, we denote by $W^u(x, f_i)$ the unstable manifold of $(f_i, \mu_i)$, $i=1,2$.

Now we give more assumptions on the generators.
\begin{h1}
\ \\ $l_{\alpha}(f_1)<0$ and $l_{\alpha}(f_2)<0$.

%\vspace{-3 pt}
%\item[(H2)]$E^u(x,f_1)=E^u(x,f_2)$
%for any $x\in K$.

%\item[(H5)] $(f_i,\mu_i)$ has no zero Lyapunov exponents.
%\vspace{-3 pt}
%\item[(H6)] The set $K$ in (H2) has finite box-counting dimension.
\end{h1}

%Under the above setting, it is true by
%seeing Theorem \ref{MET} that (H1-H3) implies the
%existence of Lyapunov exponents $\lambda_1(x, f_i)>\lambda_2(x, f_i)>\cdots$ with multiplicities $m_1(x, f_i
%)>m_2(x, f_i)>\cdots$ at $\mu_i$-a.e. $x$, which can be
%infinitely many but only admits finitely many positive ones for $i=1,2$.

%\begin{theorem}\cite[Theorem 1.1]{l1} For any $i=1,2$, suppose $(f_i,\mu_i)$ satisfies (H1-H3) above and $h_{\mu_i}(f_i)<\infty$. If $\mu_i$ is an SRB measure, then
%\begin{equation*}
%h_{\mu_i}(f_i) = \int \sum_{\lambda_j(x, f_i)>0} m_j(x, f_i) \lambda_j^+(x, f_i)d\mu_i.
%\end{equation*}
%The converse is also true if (H4-H6) hold.
%\end{theorem}

%Similarly, there are Lyapunov exponents $\lambda_1(x,
%f)>\lambda_2(x,f)>\cdots$ with multiplicities $m_1(x,
%f)>m_2(x,f)>\cdots$ at $\mu$-a.e. $x$, for the induced random transformation $f$ over $(\Omega, \mathcal{A}, \mathbf{P}_{\nu}, \theta)$. By Lemma \ref{t1}, there are only finitely many positive ones, and the following relation between Pesin's entropy formula and SRB property for $f$ holds.

%\begin{theorem}\cite[Theorem 1.1]{l1} Let $f$ be a finitely generated random transformation on an infinite dimensional Hilbert
%space $X$ over $(\Omega, \mathcal{A}, \mathbf{P}_{\nu}, \theta)$ by $f_1$ and $f_2$. Suppose $\mu\in \mathcal{M}$ and $(f_i,\mu)$ satisfies (H1-H3) above and $h_\mu(f)<\infty$. If $\mu$ is an SRB measure, then
%\begin{equation*}
%h_\mu(f) = \int \sum_{\lambda_j(x,f)>0} m_j(f, x) \lambda_j^+(x,f)d\mu.
%\end{equation*}
%The converse is also true if (H4-H6) hold.
%\end{theorem}
We are in a
situation to state  the main results of this paper.
\begin{the2}\label{thm2}
 Let $f$ be a finitely generated random transformation of an infinite dimensional Banach
  space $X$ over $(\Omega, \mathcal{A}, \mathbf{P}_{\nu}, \theta)$. Suppose $\mu\in \mathcal{M}$ and $(f_i,\mu)$ satisfies (H0-H1) above, then Ruelle's inequality
\begin{equation}\label{m1}
h_{\mu}(f)\leq \int
\sum_{i=1}^2\sum_{\lambda_{j_k}(x,f_i)>0}\nu(f_i)\lambda_{j_k}(x,f_i)m_{j_k}(x,f_i)\ \ d\mu
\end{equation}
holds, where $h_{\mu}(f)$ is the metric entropy of $f$.
\end{the2}
For Pesin's entropy formula, we need more smooth condition of the maps, thus we replace (H0) with the following conditions:
\begin{h2}
\ \\
(i) $f_1$, $f_2$ are $C^{2}$ Fr\'echet differentiable and injective;\\
(ii) the derivatives of $f_1$ and $f_2$ are also injective;\\
(iii) there exists a compact set $K\subset X$ such that $f_1(K)=K$ and $f_2(K)=K$.
\end{h2}
\begin{the3}\label{thm3}
  Let $f$ be a finitely generated random transformation of an infinite dimensional Hilbert
  space $X$ over $(\Omega, \mathcal{A}, \mathbf{P}_{\nu}, \theta)$. Suppose $\mu\in \mathcal{M}$ with supp$\mu\subset K$, $(f_i,\mu)$ satisfies (H1-H2) and $h_{\mu}(f)<+\infty$. If $\mu$ is an SRB measure, then $$h_{\mu}(f)\geq\int \sum_{i=1}^2\sum_{\lambda_{j_k}(x,f_i)>0}\nu(f_i)\lambda_{j_k}(x,f_i)d_{j_k}(x,f_i)
d\mu,$$ where $d_{j_1}(x,f_1)=m_{j_1}(x,f_1)-dim E_{j_1,r(x,f_2)+1}(x)$, $d_{j_2}(x,f_2)=m_{j_2}(x,f_2)-dim E_{r(x,f_1)+1,j_2}(x)$.
\end{the3}
If the following assumption (H3) on the generators are made, we will get Pesin's entropy formula and look more closely at SRB measures. The main purpose in making such assumption lies in the fact that we lose control of the random transformation when the stable and unstable directions of the generators mixes together with an infinite dimensional freedom. A trivial motivative example is the random transformations generated by hyperbolic torus automorphisms $f_1=\begin{bmatrix}
   2 & 1 \\
   1 & 1
  \end{bmatrix} $ and $f_2=f_1^{-1}$ with $\nu(f_1)=\nu(f_2)=\frac{1}{2}.$ It is easy to see that Corollary 2.1 and Corollary 2.2 fail without  (H3).

More precisely, let
$\lambda_{\alpha}=0$ in Theorem A and denote by
$$
 E^u(x, f_1):=\oplus_{j=1}^{r(x,f_1)} E_j(x, f_1),\;\;E^u(x, f_2):=\oplus_{j=1}^{r(x,f_2)} E_j(x, f_2).
$$
\begin{h3}
\ \\
$E^u(x,f_1)=E^u(x,f_2)$
for any $x\in K$.
\end{h3}
\begin{coro}\label{c1}
Let $f$ be a finitely generated random transformation of an infinite dimensional Hilbert
  space $X$ over $(\Omega, \mathcal{A}, \mathbf{P}_{\nu}, \theta)$. Suppose $\mu\in \mathcal{M}$ with supp$\mu\subset K$ and $(f_i,\mu)$ satisfies (H1-H3) and $h_{\mu}(f)<+\infty$. Then Pesin's entropy formula
  \begin{equation}\label{m2}
h_{\mu}(f)=\int \sum_{i=1}^2\sum_{\lambda_{j_k}(x,f_i)>0}\nu(f_i)\lambda_{j_k}(x,f_i)m_{j_k}(x,f_i)
d\mu  \end{equation}
  holds if $\mu$ is an SRB measure.
\end{coro}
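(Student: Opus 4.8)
The plan is to sandwich $h_\mu(f)$ between the upper bound supplied by Ruelle's inequality (Theorem B) and the lower bound supplied by Theorem C, and then to show that assumption (H3) collapses the gap between the two. Theorem B gives
$$h_\mu(f) \leq \int \sum_{i=1}^2 \sum_{\lambda_{j_k}(x,f_i)>0} \nu(f_i)\,\lambda_{j_k}(x,f_i)\,m_{j_k}(x,f_i)\,d\mu,$$
and Theorem C, applicable because $\mu$ is SRB, $(f_i,\mu)$ satisfies (H1-H2), $\mathrm{supp}\,\mu\subset K$ and $h_\mu(f)<\infty$, gives
$$h_\mu(f) \geq \int \sum_{i=1}^2 \sum_{\lambda_{j_k}(x,f_i)>0} \nu(f_i)\,\lambda_{j_k}(x,f_i)\,d_{j_k}(x,f_i)\,d\mu,$$
with $d_{j_1}(x,f_1)=m_{j_1}(x,f_1)-\dim E_{j_1,r(x,f_2)+1}(x)$ and $d_{j_2}(x,f_2)=m_{j_2}(x,f_2)-\dim E_{r(x,f_1)+1,j_2}(x)$. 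Since always $d_{j_k}\le m_{j_k}$, the two estimates pinch to the asserted equality (\ref{m2}) as soon as I prove that, under (H3), the defect dimensions vanish $\mu$-a.e., i.e. $\dim E_{j_1,r(x,f_2)+1}(x)=0$ and $\dim E_{r(x,f_1)+1,j_2}(x)=0$.

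The core of the argument is to extract this vanishing from (H3), where $\lambda_\alpha=0$ as fixed just before its statement. Applying Theorem A at $x\in\Gamma\cap K$, properties (a) and (b) (taking $s_1=1,\,s_2=0$) show that every nonzero $v$ lying in $E_{j_1,j_2}(x)$ or in $E_{j_1,r(x,f_2)+1}(x)$ satisfies $\tfrac1n\log\|d_xf_1^n v\|\to\lambda_{j_1}(x,f_1)>0$, so these summands sit inside the single-generator Oseledets space $E_{j_1}(x,f_1)$; matching exponents across the full splitting of $X$ (a finite-dimensional bookkeeping for the positive-exponent parts) yields
$$E^u(x,f_1)=\bigoplus_{j_1=1}^{r(x,f_1)}\Bigl(\bigoplus_{j_2=1}^{r(x,f_2)}E_{j_1,j_2}(x)\oplus E_{j_1,r(x,f_2)+1}(x)\Bigr),$$
and symmetrically
$$E^u(x,f_2)=\bigoplus_{j_2=1}^{r(x,f_2)}\Bigl(\bigoplus_{j_1=1}^{r(x,f_1)}E_{j_1,j_2}(x)\oplus E_{r(x,f_1)+1,j_2}(x)\Bigr).$$
Writing $A=\bigoplus_{j_1,j_2}E_{j_1,j_2}(x)$, $B=\bigoplus_{j_2}E_{r(x,f_1)+1,j_2}(x)$, $C=\bigoplus_{j_1}E_{j_1,r(x,f_2)+1}(x)$, these read $E^u(x,f_1)=A\oplus C$ and $E^u(x,f_2)=A\oplus B$, where $A$, $B$, $C$ are distinct summands of the direct-sum decomposition of Theorem A.

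Now (H3) asserts $A\oplus C=A\oplus B$. Because $A$, $B$, $C$ are independent summands, this forces $B=C=\{0\}$: any $v\in C\subset A\oplus C=A\oplus B$ can be written $v=a+b$ with $a\in A$, $b\in B$, so $a+b-v=0$ is a relation inside the direct sum $A\oplus B\oplus C$, whence $a=b=v=0$; the symmetric argument gives $B=\{0\}$. Hence $\dim E_{j_1,r(x,f_2)+1}(x)=0$ and $\dim E_{r(x,f_1)+1,j_2}(x)=0$ for all $j_1,j_2$ and all $x\in\Gamma\cap K$, so $d_{j_k}(x,f_i)=m_{j_k}(x,f_i)$ $\mu$-a.e. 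The lower bound then matches the upper bound, proving (\ref{m2}).

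The only genuinely delicate point is the compatibility step: verifying that the joint splitting of Theorem A refines each single-generator Oseledets splitting, so that the displayed identities for $E^u(x,f_i)$ hold. This relies on the exponent identifications in (a)-(b), the measurability of the spaces from (g)/(d), and a careful dimension-and-codimension match (made tractable by finite-dimensionality of the positive-exponent subspaces). Once that is in place, the remainder is the formal pinching between Theorems B and C.
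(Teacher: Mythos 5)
Your proposal is correct and takes essentially the same route as the paper: the paper's own proof likewise deduces from (H3) that $E^u(x,f_1)=E^u(x,f_2)=\bigoplus_{j_1=1}^{r(x,f_1)}\bigoplus_{j_2=1}^{r(x,f_2)}E_{j_1,j_2}(x)$ (so the defect spaces vanish and $d_{j_k}(x,f_i)=m_{j_k}(x,f_i)$), and then pinches $h_\mu(f)$ between Ruelle's inequality (Theorem B) and the lower bound of Theorem C (which the paper expresses via Lemma 3.7 and the determinant integral $\int\log|\det(d_xf_\omega|_{E^u(\omega,x)})|\,d\mathbf{P}_\nu\times\mu$). Your explicit direct-sum argument forcing $B=C=\{0\}$ is simply a more detailed version of the identification the paper asserts in one line.
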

\begin{coro}\label{c2}
Let $f$ be a finitely generated $C^2$ random transformation of an infinite dimensional Hilbert
  space $X$ over $(\Omega, \mathcal{A}, \mathbf{P}_{\nu}, \theta)$. Suppose $\mu\in \mathcal{M}$ with supp$\mu\subset K$ and $(f_i,\mu)$ satisfies (H1-H3) above and $h_{\mu}(f)<+\infty$. Then
\begin{itemize}
\item [(a)] $h_{\mu}(f)\geq\sum_{i=1}^{2}\nu_i h_{\mu}(f_i)$ if $\mu$ is an SRB measure of $f$;
\item [(b)] $h_{\mu}(f)\leq\sum_{i=1}^{2}\nu_i h_{\mu}(f_i)$ if $\mu$ is an SRB measure of $f_1$ and $f_2$;
\item [(c)] $h_{\mu}(f)=\sum_{i=1}^{2}\nu_i h_{\mu}(f_i)$ if $\mu$ is an SRB measure of $f$, $f_1$ and $f_2$.
\end{itemize}
\end{coro}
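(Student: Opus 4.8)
The plan is to derive Corollary \ref{c2} purely from the entropy (in)equalities already in hand, combining the random formulas for $f$ (Theorem B and Corollary \ref{c1}) with the corresponding deterministic formulas for the generators $f_1,f_2$. Throughout I would write $\nu_i=\nu(f_i)$ and introduce the shorthand
\[
\Lambda_i:=\int\sum_{\lambda_j(x,f_i)>0}\lambda_j(x,f_i)\,m_j(x,f_i)\,d\mu,\qquad i=1,2,
\]
for the integral of the sum of the positive Lyapunov exponents of the single map $f_i$, counted with multiplicity; note $\Lambda_i<\infty$ since $l_\alpha(f_i)<0$ forces finitely many positive exponents of finite total multiplicity, bounded on the compact set $K$. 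The first thing to record is that the right-hand sides of both the random Ruelle inequality (\ref{m1}) and the random Pesin formula (\ref{m2}) factor through these two numbers: since $\nu(f_i)$ does not depend on $x$ and, by Theorem A, the exponents $\lambda_{j_k}(x,f_i)$ and multiplicities $m_{j_k}(x,f_i)$ appearing there are exactly those of $f_i$ supplied by Theorem \ref{MET}, one has
\[
\int\sum_{i=1}^2\sum_{\lambda_{j_k}(x,f_i)>0}\nu(f_i)\lambda_{j_k}(x,f_i)\,m_{j_k}(x,f_i)\,d\mu=\nu_1\Lambda_1+\nu_2\Lambda_2 .
\]

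For part (a), since $\mu$ is an SRB measure of $f$ and $(f_i,\mu)$ satisfies (H1-H3) with $h_\mu(f)<\infty$, Corollary \ref{c1} applies and gives $h_\mu(f)=\nu_1\Lambda_1+\nu_2\Lambda_2$. On the other hand each $f_i$ is $C^2$, satisfies (H) and $l_\alpha(f_i)<0$ by (H1-H2), so the deterministic Ruelle inequality $h_\mu(f_i)\le\Lambda_i$ holds; this is the single-generator specialization of Theorem B, obtained by taking the law on $\mathfrak{F}$ to be the Dirac mass at $f_i$ (so that the random system degenerates to the deterministic $f_i$ and the right-hand side collapses to $\Lambda_i$), and is in any case standard in the infinite-dimensional setting. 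Multiplying by $\nu_i\ge 0$ and summing yields
\[
\sum_{i=1}^2\nu_i h_\mu(f_i)\le\nu_1\Lambda_1+\nu_2\Lambda_2=h_\mu(f),
\]
which is (a).

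For part (b), $\mu$ is an SRB measure of both $f_1$ and $f_2$; the finiteness $h_\mu(f_i)<\infty$ needed below follows from the deterministic Ruelle inequality $h_\mu(f_i)\le\Lambda_i<\infty$ just used, so the deterministic Pesin formula \cite[Theorem 1.1]{l1} applies to each generator and upgrades this to the equality $h_\mu(f_i)=\Lambda_i$, $i=1,2$. Feeding these into the random Ruelle inequality of Theorem B then gives
\[
h_\mu(f)\le\nu_1\Lambda_1+\nu_2\Lambda_2=\sum_{i=1}^2\nu_i h_\mu(f_i),
\]
which is (b). Part (c) is then immediate: under its hypotheses $\mu$ is SRB for $f$, $f_1$ and $f_2$ simultaneously, so both (a) and (b) hold and the two inequalities combine into the desired equality $h_\mu(f)=\sum_{i=1}^2\nu_i h_\mu(f_i)$.

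The one point that is more than bookkeeping, and which I would treat with care, is the identification used in the opening paragraph of the per-generator spectrum in the random formulas with the genuine Lyapunov spectrum of $f_i$; this is exactly where (H3) is indispensable. When $E^u(x,f_1)=E^u(x,f_2)$ the mixed blocks $E_{j_1,r(x,f_2)+1}(x)$ and $E_{r(x,f_1)+1,j_2}(x)$ of the Theorem A splitting are forced to be trivial, so that $d_{j_k}(x,f_i)=m_{j_k}(x,f_i)$ and the lower bound of Theorem C sharpens to the full equality of Corollary \ref{c1}; without (H3) these mixed directions carry positive exponents that are invisible to the random dynamics and the above factorization fails, as the torus example $f_1,f_2=f_1^{-1}$ shows. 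Aside from this, I expect only routine verification that the relevant functions are measurable and $\mu$-integrable, so that the integrals split and the scalar multiplications by $\nu_i$ are legitimate.
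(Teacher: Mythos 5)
Your proposal is correct, and it is worth noting that the paper never actually writes down a proof of Corollary \ref{c2}: the proofs section passes from the proof of Corollary \ref{c1} directly to Theorem D. Your argument supplies exactly the assembly that the paper's results are set up to deliver. Writing $\Lambda_i=\int\sum_{\lambda_j(x,f_i)>0}\lambda_j(x,f_i)m_j(x,f_i)\,d\mu$, part (a) is Corollary \ref{c1} (giving $h_\mu(f)=\nu_1\Lambda_1+\nu_2\Lambda_2$) combined with the deterministic Ruelle inequality $h_\mu(f_i)\le\Lambda_i$; part (b) is Theorem B combined with the deterministic entropy formula of \cite[Theorem 1.1]{l1} giving $h_\mu(f_i)=\Lambda_i$; part (c) combines the two. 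Two details you handle that the paper glosses over are genuine improvements: first, you verify $h_\mu(f_i)<+\infty$ via the deterministic Ruelle inequality before invoking \cite[Theorem 1.1]{l1}, whose hypotheses require it, since the corollary only assumes $h_\mu(f)<+\infty$; second, you make explicit that the right-hand sides of (\ref{m1}) and (\ref{m2}) factor as $\nu_1\Lambda_1+\nu_2\Lambda_2$ because $\nu(f_i)$ is constant in $x$ and the spectral data in those formulas are, by construction, those of the individual generators from Theorem \ref{MET}. Your closing remark on (H3) is also accurate: with $\lambda_\alpha=0$, a nonzero vector of a mixed block $E_{j_1,r(x,f_2)+1}(x)$ would lie in $E^u(x,f_1)=E^u(x,f_2)$ yet have nonpositive $f_2$-exponent, which is impossible, so the mixed blocks vanish, $d_{j_k}=m_{j_k}$, and Theorem C upgrades to the equality used in (a). The only step I would flag as slightly informal is the assertion $\Lambda_i<\infty$, which rests on the uniform boundedness over $K$ of the unstable dimension (a consequence of $l_\alpha(f_i)<0$ and compactness); but the paper itself assumes finiteness of these integrals implicitly throughout, so your level of detail matches the source.
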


To date, to the best of our knowledge, there has been little discussion
of relation of SRB measures of finitely generated smooth random transformation and the SRB measures of its generators. This paper only severs as a first
attempt towards this direction, and the results are still far from satisfaction. The assumption (H3) in this setting seems artificial and redundant, but we can not remove it for technical reasons. We believe that if the generators have common SRB measures, then they could be SRB measures of the random transformation, and if we add some mild conditions (for example condition H3) on the generators the converse could hold true. We leave them as further questions.
\begin{open}
\begin{itemize}

%\item [(a)] Are Corollary \ref{c1} and Corollary \ref{c2} true without of assumption (H3)?
\item [(a)] Does equality (\ref{m2}) imply that $\mu$ is an SRB measure by adding assumption that $(f_i,\mu)$ has no zero Lyapunov exponents and the set $K$ has finite box-counting dimension?
\item [(b)] If $\mu$ is an SRB measure of every generators, then is $\mu$ an SRB measure of $f$?
\item [(c)] If $\mu$ is an SRB measure of $f$, and assumption (H3) is satisfied, then is $\mu$ an SRB measure of every generators?
\end{itemize}
\end{open}
\subsection{\hspace*{-0.15in}. Friedland's entropy of  $\mathbb{N}^2$-actions}
Friedland's entropy of  $\mathbb{N}^k$-actions was introduced by Friedland \cite{Friedland} via the topological entropy of the shift map on the induced orbit space. More precisely,
let  $\mathbf{f}:\mathbb{N}^2\longrightarrow C^r(K, K) (r\ge 0)$ be a $\mathbb{N}^2$-action on $X$ with the generators $\{f_i\}_{i=1}^2$. Define the \emph{orbit space} of $\mathbf{f}$ by
$$
K_{\mathbf{f}}=\bigl\{\bar{x}=\{x_n\}_{n\in
{\mathbb{N}}}\in\prod_{n\in{\mathbb{Z}}}K : \text{ for any }  n\in{\mathbb{N}}, f_{i_n}(x_n)=x_{n+1}
\text{ for some } f_{i_n}\in\{f_i\}_{i=1}^2 \bigr\}.
$$
This is a closed subset of the compact space
$\prod\limits_{n\in{\mathbb{Z}}}K$ and so is again compact. A
natural metric $\bar{d}$ on $K_{\mathbf{f}}$ is defined by
\begin{equation}\label{metric}
\bar{d}\bigl(\bar{x},\bar{y}\bigr)=\sum_{n=0}^\infty\frac{d(x_n,y_n)}{2^{n}}
\end{equation}
for $\bar{x}=\{x_n\}_{n\in
{\mathbb{N}}}, \bar{y}=\{y_n\}_{n\in
{\mathbb{N}}}\in K_{\mathbf{f}}$.
We can define a shift map
$$
\sigma_{\mathbf{f}}: K_{\mathbf{f}}\rightarrow K_{\mathbf{f}},\;\sigma_{\mathbf{f}}(\{x_n\}_{n\in {\mathbb{N}}})=\{x_{n+1}\}_{n\in
{\mathbb{N}}}.
$$
 Thus we have associated an
${\mathbb{N}}$-action $\sigma_{\mathbf{f}}$ with the ${\mathbb{N}}^2$-action $\mathbf{f}$.

\begin{definition}
\emph{Friedland' s entropy} of an
${\mathbb{N}}^2$-action $\mathbf{f}$ is defined by the topological entropy of the shift
map $\sigma_{\mathbf{f}}: K_{\mathbf{f}}\rightarrow K_{\mathbf{f}}$, i.e.,
\begin{equation}\label{Friedland}
h(\sigma_{\mathbf{f}})=\lim_{\varepsilon\rightarrow
0}\limsup_{n\rightarrow\infty}\frac{1}{n}\log
\;s_{\bar{d}}(\sigma_{\mathbf{f}},n,\varepsilon, X_{\mathbf{f}}),
\end{equation}
where $s_{\bar{d}}(\sigma_{\mathbf{f}},n,\varepsilon, K_{\mathbf{f}})$ is the largest cardinality of any $(\sigma_{\mathbf{f}},n, \varepsilon)$-separated sets of $K_{\mathbf{f}}$.
\end{definition}
Unlike the classical entropy  for $\mathbb{N}^2$ -actions, Friedland's entropy is positive when the generators have finite entropy as single transformations. From the known results about Friedland's entropy, we can see that it is not an easy task to compute it, even for some ``simple" examples (see for example \cite{Friedland,Geller,Einsiedler}).

 However, applying the entropy formula (\ref{m2}) for finitely generated random transformation, we give some formulas and bounds of Friedland's entropy for smooth $\mathbb{N}^2$-actions in a infinite dimensional Hilbert space. %Hence, the Friedland's entropy formulas in \cite{Geller,Einsiedler} are all special cases of the following  result.

 \begin{the4}\label{444}
Let $\mathbf{f}:\mathbb{N}^2\longrightarrow C^2(X, X)$ be a $C^2$ $\mathbb{N}^2$-action on an infinite dimensional Hilbert
  space $X$. Suppose $\mu\in \mathcal{M}_f$ with supp$\mu\subset K$ and $(f_i,\mu)$ satisfies (H1-H3) above and $h_{\mu}(f)<+\infty$, where $f$ is a random transformation generated by $\{f_1,f_2\}$ over $(\Omega, \mathcal{A}, \mathbf{P}_{\nu}, \theta)$. If $\mathbf{P}_{\nu}\times\mu$ is a measure with maximal entropy of $F$ and $\mu$ is an SRB measure, then
\begin{equation}\label{Fried1}
h(\sigma_{\mathbf{f}})\leq -\sum_{i=1}^2\nu(f_i)\log\nu(f_i) +\int \sum_{i=1}^2\sum_{\lambda_{j_k}(x,f_i)>0}\nu(f_i)\lambda_{j_k}(x,f_i)m_{j_k}(x,f_i)
d\mu.
\end{equation}
 Furthermore, if  $\mu\in\mathcal{M}_f^e$ and $\mu (\{x\in X:f_1(x)=f_2(x)\})=0$, then we get the following formula of Friedland's entropy
\begin{equation}\label{Fried3}
h(\sigma_{\mathbf{f}})=\log
\Big(\sum_{i=1}^2\exp(\sum_{\lambda_{j_k}(x,f_i)>0}\lambda_{j_k}(x,f_i)m_{j_k}(x,f_i))\Big).
\end{equation}
\end{the4}
\begin{rem}
In Theorem D, we require that the invariant measure of $F$ is in the form of $\mathbf{P}_{\nu}\times\mu$, we can see \cite[section 3.4 ]{Liu01} for the existence of such a measure for certain systems.
\end{rem}
%A  random measurable set $U\in \mathcal{F}\otimes \mathcal{B}$ is called open if its $\omega$-sections $U_{\omega}:=\{x\in X:\ (\omega,
%x)\in U\}, \omega\in \Omega$ are open for any $\omega\in\Omega$. Given an invariant random compact set $K$
%of $\phi$. We say that $\phi$ is $C^1$  over $K$ if
%there exists some random invariant open set $U$ such that for
%$\mathbf{P}$-a.e. $w$  we have $K_{\omega}\subset U_{\omega}$,  $\phi(t,\omega):\ U_{w}\to X$
%is $C^1$, and  for each $(x, y, v)\in U_{\omega}\times U_{\omega}\times X$, the map $\omega\mapsto
%(\phi(t,\omega)x, D_{y}\phi(t,\omega)(v))$ is measurable from $\Omega$ to $X\times X$.

%In our setting, we have some remarks on Theorem \ref{thm2.1}.  First we notice that for $\mu$-a.e. $(\omega,x)$, there exists only finitely many positive Lyapunov exponents. This is a direct consequence of the theorem and the following lemma

% \begin{lemma}\label{lem2.2}
%  For $\mathbf{P}$-a.e. $\omega$,
%  \[
%\lim\limits_{t\rightarrow +\infty}\frac{1}{t}\log \sup_{x\in
%K_{\omega}}\|D_x \phi(t,\omega)\|_{\alpha}=\lim\limits_{t\rightarrow
%\infty}\frac{1}{t}\int \log \sup_{x\in K_{\omega}}\|D_x \phi(t,\omega)\|_{\alpha}\
%d\mathbf{P}(\omega)<0.
%  \]\lim\limits
%\end{lemma}

\section{\hspace*{-0.15in}. Proofs of Theorem A-Theorem D}
\subsection{Proof of Theorem A}
Recall that $\Gamma_{f_i}\subset X$ is a full measure set such that $f_i\Gamma_{f_i}=\Gamma_{f_i}$, for any $x\in\Gamma_{f_i}$,
$\lambda(x,v,f_i)=\limsup_{n\rightarrow\infty}\frac{1}{n}\log\|d_x f_i^n v\|$, for any $v\in X.$

\begin{lemma}\label{1}
For all
$i,j=1,2$, $i\neq j$, we have
$$
\lambda(f_ix,d_x f_iv,f_j)=\lambda(x,v,f_j)\;\;\text{and}\;\; l_{\alpha}(x, f_i)=l_{\alpha}(f_j x, f_i).
$$
\end{lemma}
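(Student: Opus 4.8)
The plan is to make everything flow from the single structural input available, namely commutativity $f_i\circ f_j=f_j\circ f_i$, turned into operator identities by the chain rule and then read off as exponential estimates using the uniform size of the derivatives on the compact invariant set $K$. Iterating commutativity gives $f_j^{\,n}\circ f_i=f_i\circ f_j^{\,n}$ and $f_i^{\,n}\circ f_j=f_j\circ f_i^{\,n}$ for every $n$, and differentiating at $x$ produces the two identities
$$
d_{f_ix}f_j^{\,n}\circ d_xf_i=d_{f_j^{\,n}x}f_i\circ d_xf_j^{\,n},\qquad d_{f_jx}f_i^{\,n}\circ d_xf_j=d_{f_i^{\,n}x}f_j\circ d_xf_i^{\,n}.
$$
The first will drive the $\lambda$-statement and the second the $l_\alpha$-statement. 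Throughout I will use that, since $f_1,f_2$ are $C^1$ and $K$ is compact and invariant, the quantities $M_i:=\sup_{y\in K}\|d_yf_i\|$ are finite by continuity.

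The easy halves come out directly. Applying the first identity to $v$, the ratio defining $\lambda(f_ix,d_xf_iv,f_j)$ equals $\tfrac1n\log\|d_{f_j^{\,n}x}f_i\,(d_xf_j^{\,n}v)\|\le \tfrac1n\log\big(M_i\|d_xf_j^{\,n}v\|\big)$, whence $\lambda(f_ix,d_xf_iv,f_j)\le\lambda(x,v,f_j)$ on letting $n\to\infty$. For the second identity I would measure with $\|\cdot\|_\alpha$ and combine (\ref{e-2.1}) with the elementary refinements $\|A\circ B\|_\alpha\le\|A\|\,\|B\|_\alpha$ and $\|A\circ B\|_\alpha\le\|A\|_\alpha\,\|B\|$ (both immediate from the definition of $\alpha$); together with $M_j<\infty$ these yield one of the two inequalities, say $l_\alpha(f_jx,f_i)\le l_\alpha(x,f_i)$, after dividing by $n$ and passing to the limit.

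The reverse inequalities are the heart of the matter, and this is where I expect the main obstacle. The reverse of the $\lambda$-statement asks for a lower bound $\|d_{f_j^{\,n}x}f_i\,u\|\ge c\,\|u\|$ on the unit vectors $u=d_xf_j^{\,n}v/\|d_xf_j^{\,n}v\|$, and the purely infinite-dimensional difficulty is that injectivity of $d_yf_i$ gives no such coercivity, an injective bounded operator on a Hilbert space need not be bounded below. I would bypass this by decomposing $v$ through the splitting of $f_j$ from Theorem \ref{MET}: on each finite-dimensional block $E_k(x,f_j)$ the map $d_xf_i$ is injective between finite-dimensional spaces, hence automatically bounded below, with constants varying subexponentially along the orbit by property (e); the contribution of $E_\alpha(x,f_j)$ is already below the threshold $\lambda_\alpha$. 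Since $\lambda(x,v,f_j)$ is the exponent of the leading surviving component of $v$, this reduces the reverse bound to the finite-dimensional situation and gives equality. For $l_\alpha$ I would run the second identity backwards, using that $f_i|_K,f_j|_K$ are homeomorphisms of the compact set $K$ (continuous bijections, as $f_i(K)=f_j(K)=K$ with $f_i,f_j$ injective) so that every base point lies on a full two-sided orbit; the correction operators $d_{f_i^{\,n}x}f_j$ and $d_xf_j$ perturb $\|\cdot\|_\alpha$ only subexponentially because $\|\cdot\|_\alpha$ is insensitive to the bounded, orbitwise non-degenerate corrections and $l_\alpha(f_j)$ is finite, which together with the Kingman invariance of $l_\alpha(\cdot,f_i)$ along $f_i$-orbits forces $l_\alpha(x,f_i)\le l_\alpha(f_jx,f_i)$. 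The contrast with the finite-dimensional argument of \cite{Hu} is precisely this: there injectivity of the derivative is invertibility, so the two one-sided estimates collapse at once, whereas here the collapse must be engineered through the finite-dimensional Lyapunov blocks of Theorem \ref{MET} and the stability of the index of compactness.
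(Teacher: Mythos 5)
Your instinct about where the difficulty lies is correct, and it exposes a real contrast with the paper: the paper's own proof simply asserts a uniform two-sided bound $C^{-1}\|v\|\leq\|d_xf_1v\|\leq C\|v\|$ for all $x\in K$, $v\in X$ (i.e.\ uniform coercivity of the derivatives on $K$), after which both identities follow in two lines by sandwiching; you correctly observe that injectivity gives no such lower bound in infinite dimensions, and you try to rebuild the lemma without it. For the first identity (the $\lambda$-statement) your replacement is viable in outline: on the finite-dimensional blocks $E_k(x,f_j)$ of Theorem \ref{MET}, injectivity of $d_xf_i$ does give a pointwise lower bound $c_k(x)=\min\bigl\{\|d_xf_iu\|:u\in E_k(x,f_j),\ \|u\|=1\bigr\}>0$. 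But your justification that these constants vary subexponentially ``by property (e)'' is misplaced: property (e) controls the norms of the projections $\pi_k$, not the minimal conorm of $d_xf_i$ on the blocks. What you actually need is a recurrence argument for the measurable functions $c_k$ (e.g.\ Poincar\'e recurrence applied to the sets $\{c_k\geq 1/m\}$, giving a subsequence of times along which $c_k(f_j^nx)$ stays bounded below, which suffices because $\lambda$ is a $\limsup$); this is exactly the device the paper itself uses to prove (\ref{0}) in Theorem A, and with it your argument for the $\lambda$-identity can be completed, at least $\mu$-a.e.

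The genuine gap is in the second identity, $l_{\alpha}(x,f_i)=l_{\alpha}(f_jx,f_i)$, and it affects both halves of your argument. Even your ``easy half'' does not go through: commutativity only gives you the composition $d_{f_jx}f_i^n\circ d_xf_j=d_{f_i^nx}f_j\circ d_xf_i^n$, and the refinements $\|A\circ B\|_{\alpha}\leq\|A\|\,\|B\|_{\alpha}$, $\|A\circ B\|_{\alpha}\leq\|A\|_{\alpha}\|B\|$ bound this composition only \emph{from above}, simultaneously by $M_j\|d_xf_i^n\|_{\alpha}$ and by $\|d_{f_jx}f_i^n\|_{\alpha}\,\|d_xf_j\|$. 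To compare $\|d_{f_jx}f_i^n\|_{\alpha}$ with $\|d_xf_i^n\|_{\alpha}$ you must bound the composition from \emph{below} in terms of one of them, i.e.\ cancel the fixed factor $d_xf_j$ -- which is precisely the coercivity/invertibility you declared unavailable. The hard half is then carried entirely by the sentence that $\|\cdot\|_{\alpha}$ ``is insensitive to the bounded, orbitwise non-degenerate corrections,'' which is an assertion, not an argument, and it is false in general: if $d_xf_j$ is injective with non-closed range, or compact, then $\|B\circ d_xf_j\|_{\alpha}$ reflects only the restriction of $B$ to the range of $d_xf_j$ and can even vanish identically (a composition with a compact operator is compact, hence has index of compactness $0$), yielding no information about $\|B\|_{\alpha}$. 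Nor can your block decomposition rescue this: $\|\cdot\|_{\alpha}$ is blind to finite-dimensional subspaces, so the $l_{\alpha}$-identity is governed by the infinite-codimensional piece $E_{\alpha}(x,f_j)$ -- exactly where injectivity gives no lower bound. As written, your proposal therefore does not prove the second half of the lemma; to close it you need either some uniform lower bound on the derivatives (which is what the paper assumes outright, writing $\|d_xf_2^n\|_{\alpha}\leq\|d_{f_2^nx}f_1\,d_xf_2^n\|_{\alpha}\leq C\|d_xf_2^n\|_{\alpha}$), or a genuinely new mechanism for transporting the index of compactness along the $f_j$-orbit.
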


\begin{proof}
By symmetry, we only prove the case for $i=1,j=2.$ There exists $C>0$ such that for any $x\in K$, $v\in X,$
$C^{-1}\|v\|\leq\|d_x f_1 v\|\leq C\|v\|$.
Thus
$
C^{-1}\|d_x f_2^n v\|\leq\|d_{f_2^n x} f_1d_x f_2^n v\|\leq C\|d_x f_2^n v\|.
$
So
$$
\limsup_{n\rightarrow\infty}\frac{1}{n}\log\|d_{f_1 x} f_2^n d_x f_1 v\|=\limsup_{n\rightarrow\infty}\frac{1}{n}\log\|d_{f_2^n x} f_1 d_x f_2^n v\|=\limsup_{n\rightarrow\infty}\frac{1}{n}\log\|d_x f_2^n v\|.
$$

Similarly, there exists $C>1$ such that for any $x\in K$, $C^{-1}\leq\|d_x f_1 \|\leq C$.
Thus
$\|d_x f_2^n\|_{\alpha}\leq\|d_{f_2^n x} f_1d_x f_2^n\|_{\alpha}\leq C\|d_x f_2^n\|_{\alpha}.$
Hence, $l_{\alpha}(x, f_1)=l_{\alpha}(f_2 x, f_1)$.
\end{proof}
\begin{coro}\label{2} For $i,j=1,2$,
\begin{itemize}

\item[(a)] $\Gamma_{f_i}$ are $f_j$-invariant;

\item[(b)] $\lambda_k(x, f_i)$, $m_k(x, f_i)$, $\pi_k(x, f_i)$, $k=1,\ldots, r(x, f_i)$ are $f_j$-invariant;

\item[(c)] $d_x f_j E_k(x, f_i)=E_k(f_j x, f_i)$, $k=1,\ldots, r(x, f_i)$;

\item[(d)] $d_x f_j E_{\alpha}(x, f_i)\subset E_{\alpha}(f_j x, f_i)$.
\end{itemize}
\end{coro}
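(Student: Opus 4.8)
The common engine for all four parts is the intertwining identity furnished by commutativity, together with the uniform two-sided bound on $df_j$ over the compact set $K$ that was already isolated in the proof of Lemma~\ref{1}. Fixing $i\neq j$, the relation $f_i\circ f_j=f_j\circ f_i$ and the chain rule give, for every $x\in K$ and $n\ge0$,
\begin{equation}\label{eq:inter}
d_{f_i^n x}f_j\circ d_x f_i^{\,n}=d_{f_j x}f_i^{\,n}\circ d_x f_j .
\end{equation}
Since $f_j|_K$ is a homeomorphism of $K$, there is $C>1$ with $C^{-1}\|u\|\le\|d_y f_j u\|\le C\|u\|$ for all $y\in K$, $u\in X$ (the constant of Lemma~\ref{1}), and likewise for $f_j^{-1}$. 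Feeding these bounds into \eqref{eq:inter} at the point $y=f_i^n x\in K$ shows that $\|d_{f_j x}f_i^{\,n}(d_x f_j v)\|$ and $\|d_x f_i^{\,n}v\|$ agree up to the factor $C^{\pm1}$; this is exactly Lemma~\ref{1}, and upon taking $\tfrac1n\log$ it says that $d_x f_j$ preserves the $f_i$-exponent of every vector (the genuine limit, not merely the $\limsup$).

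I would prove (b) first, directly from the volume-growth formula. Pre-composing a bounded operator with $d_x f_j$, or post-composing with $d_{f_i^n x}f_j$, alters each of its top $p$ singular values by a factor in $[C^{-1},C]$, hence multiplies $V_p$ by a factor in $[C^{-p},C^p]$; applying this to both sides of \eqref{eq:inter} yields $V_p(d_{f_j x}f_i^{\,n})\asymp V_p(d_x f_i^{\,n})$ with a constant independent of $n$. By Theorem~\ref{MET}(c) the two limits $\tfrac1n\log V_p$ coincide, so $\sum_{k\le p}\widetilde\lambda_k(f_jx,f_i)=\sum_{k\le p}\widetilde\lambda_k(x,f_i)$ for all admissible $p$; since the $\widetilde\lambda_k$ are the non-increasing spectra repeated with multiplicity, the whole $f_i$-spectra at $x$ and at $f_jx$ match. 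This gives the $f_j$-invariance of $r(\cdot,f_i)$, $\lambda_k(\cdot,f_i)$ and $m_k(\cdot,f_i)$; the equivariance of the projections $\pi_k$ will come at the end, once (c) is in hand.

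Part (a) is then a transfer of regularity: taking $\Gamma_{f_i}$ to be the set of Lian--Lu regular points for $f_i$ (those at which the limits of Theorem~\ref{MET}(a),(c) exist), relation \eqref{eq:inter} and the $V_p$-comparison carry every defining limit from $x$ to $f_jx$, and the same argument run for $f_j^{-1}$ gives the reverse inclusion, so $f_j\Gamma_{f_i}=\Gamma_{f_i}$; as $\mu$ is $f_j$-invariant the set keeps full measure (replacing $\Gamma_{f_i}$ by $\bigcap_{m\in\mathbb Z}f_j^{\,m}\Gamma_{f_i}$ if one wants exact invariance). Part (d) is equally quick: for $v\in E_\alpha(x,f_i)$ the $f_i$-exponent is $\le\lambda_\alpha$, so by the exponent preservation $w=d_xf_jv$ has $f_i$-exponent $\le\lambda_\alpha$ at $f_jx$; writing $w=h+e$ along $X=H(f_jx)\oplus E_\alpha(f_jx)$ with $H:=\bigoplus_k E_k$, a nonzero component $h$ would force exponent $\ge\lambda_{r(f_jx,f_i)}>\lambda_\alpha$, whence $h=0$ and $w\in E_\alpha(f_jx,f_i)$.

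The substantive step is the equality in (c), $d_xf_jE_k(x,f_i)=E_k(f_jx,f_i)$. Exponent comparison alone only places $d_xf_jE_k(x,f_i)$ inside the forward filtration $V_k(f_jx):=\bigoplus_{l\ge k}E_l(f_jx,f_i)\oplus E_\alpha(f_jx,f_i)$ and makes the whole filtration $f_j$-equivariant, but it neither singles out the individual complement $E_k$ nor excludes a spurious $E_\alpha$-component in the image. To close it I would first note that $E_\alpha$ is intrinsic, being exactly $\{v:\text{forward }f_i\text{-exponent}\le\lambda_\alpha\}$, and that the push-forward $y\mapsto d_{f_j^{-1}y}f_j\,H(f_j^{-1}y)$ is, by \eqref{eq:inter}, a $df_i$-invariant complement of $E_\alpha$ whose forward exponents all exceed $\lambda_\alpha$; uniqueness of this dominated complement forces $d_xf_jH(x)=H(f_jx)$, killing the contamination. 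Restricted to the finite-dimensional bundle $H$, on which $df_i$ is an invertible cocycle over the homeomorphism $f_i|_K$, the Lian--Lu subspaces $E_k$ coincide with the canonical two-sided Oseledets subspaces, and \eqref{eq:inter} makes $d_xf_j\colon H(x)\to H(f_jx)$ a conjugacy of these cocycles with equal spectra; such a conjugacy maps Oseledets spaces to Oseledets spaces, giving (c). Finally $\pi_k(f_jx,f_i)\,d_xf_j=d_xf_j\,\pi_k(x,f_i)$ follows from (c) and the $d_xf_j$-invariance of the full decomposition, completing (b). I expect this uniqueness-of-the-invariant-splitting step -- passing from the canonical filtration to the individual subbundles $E_k$ and ruling out the $E_\alpha$-leakage -- to be the main obstacle, which is why one must descend to the invertible finite-dimensional bundle rather than argue by exponents alone.
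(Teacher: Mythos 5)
Your high-level strategy (the intertwining identity from commutativity plus the uniform two-sided bound on $df_j$ over $K$) is exactly the engine the paper relies on --- in fact the paper offers no proof of Corollary \ref{2} at all, stating it as an immediate consequence of Lemma \ref{1} --- but two of your key steps fail, and both at the same pressure point: under (H0) the operator $d_xf_j$ is injective and bounded below, but in an infinite-dimensional space it need \emph{not} be surjective, and nothing in the hypotheses makes it so. The singular-value claim behind your proof of (b) is false for pre-composition with a non-surjective operator, however good its lower bound: take $B$ the unilateral shift on $\ell^2$ (an isometry, so $C=1$) and $A$ the rank-one projection onto the first coordinate; then $AB=0$ while $s_1(A)=1$. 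Consequently, from the identity $d_{f_i^nx}f_j\circ d_xf_i^{\,n}=d_{f_jx}f_i^{\,n}\circ d_xf_j$ you may only conclude
\[
C^{-p}\,V_p\bigl(d_xf_i^{\,n}\bigr)\;\le\;V_p\bigl(d_{f_jx}f_i^{\,n}\circ d_xf_j\bigr)\;\le\;C^{p}\,V_p\bigl(d_{f_jx}f_i^{\,n}\bigr),
\]
i.e.\ the one-sided estimate $\liminf_n\frac1n\log V_p(d_{f_jx}f_i^{\,n})\ge\sum_{k\le p}\widetilde\lambda_k(x,f_i)$; the reverse direction would require a lower bound for $V_p(A\circ d_xf_j)$ in terms of $V_p(A)$, which is precisely the false claim. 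So your argument shows the $f_i$-spectrum at $f_jx$ \emph{dominates} that at $x$, not that they coincide, and with it the dimension count underlying (c) collapses. The repair is measure-theoretic and is the same device the paper itself uses in the proof of Theorem A: the partial sums $\sum_{k\le p}\widetilde\lambda_k(\cdot,f_i)$ are measurable and $\mu$ is $f_j$-invariant, so integrating the pointwise inequality over $\mu$ (or invoking Poincar\'e recurrence) forces equality $\mu$-a.e. This is unavoidable --- the corollary is an a.e.\ statement and cannot be obtained orbitwise from your comparison alone.

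The second gap is the ``uniqueness of the dominated complement'' invoked for (c): it is false pointwise. Any $df_i$-equivariant graph $\{v+\phi(v):v\in H(x)\}$ with $\phi$ taking values in $E_\alpha$ is again an invariant complement all of whose nonzero vectors have exponent $>\lambda_\alpha$ (the exponent of $v+\phi(v)$ equals that of $v$), and along a non-recurrent orbit nonzero equivariant $\phi$'s exist: push an arbitrary $\phi_x$ forward, its norms decay like $e^{\,n(\lambda_\alpha-\lambda_{r(x,f_i)})}$, which is perfectly admissible pointwise. Uniqueness again holds only $\mu$-a.e., by recurrence, and in any case presupposes $\dim H(x)=\dim H(f_jx)$, i.e.\ part (b). The clean finish --- consistent with what your final paragraph gestures at --- is: (i) obtain the a.e.\ spectrum equality as above; (ii) use that on the fast bundle the cocycle $df_i$ is invertible over the homeomorphism $f_i|_K$, so each $E_k(x,f_i)$ is characterized by possessing a complete orbit $(v_n)_{n\in\mathbb Z}$ with forward exponent $\lambda_k$ and backward exponent $-\lambda_k$; (iii) push complete orbits through $df_j$ via the intertwining identity and the bound of Lemma \ref{1} to get $d_xf_jE_k(x,f_i)\subset E_k(f_jx,f_i)$, and upgrade to equality by injectivity of $d_xf_j$ together with the equality of the finite dimensions $m_k(x,f_i)=m_k(f_jx,f_i)$ from (b); the equivariance of the projections $\pi_k$ and parts (a), (d) then follow as you indicate.
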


\begin{proof}[Proof of Theorem A]
For any point $x\in\Gamma_{f_1}$, let
$$
X = E_1(x,f_1) \oplus E_2(x,f_1) \oplus \cdots \oplus E_{r(x,f_1)}(x,f_1) \oplus E_{\alpha}(x,f_1)
$$ be the decomposition for $f_1$. By Corollary \ref{2},
$$d_x f_2 E_k(x, f_1)=E_k(f_2 x, f_1),\;k=1,\ldots, r(x, f_1)
$$ and $d_x f_2 E_{\alpha}(x, f_1)\subset E_{\alpha}(f_2 x, f_1)$. Restricted on $E_k(x, f_1)$ and $E_{\alpha}(x, f_1)$, $\{d_x f_2^n\}$ is a cocycle on $K$ with respect to $f_2$. Now we use Multiplicative Ergodic Theorem (Theorem \ref{MET}) for $E_k(x, f_1)$ and $E_{\alpha}(x, f_1)$ to get subsets $\Gamma^{k}\subset\Gamma_{f_1}$ and $\Gamma^{\alpha}\subset\Gamma_{f_1}$, such that $\mu(\Gamma^{\alpha})=\mu(\Gamma^{k})=1$ for any $\mu\in\mathcal{M}$. Then for any $x\in\Gamma^{k}$ (resp. $x\in\Gamma^{\alpha}$), after relabeling the subscript, if necessary, $E_{k,j_2}(x)$ and $E_{k,r(x,f_2)+1}$ (resp. $E_{r(x,f_1)+1,j_2}(x)$ and $E_{\alpha}(x)$) have desired properties. We take $\Gamma=\cap_{k=1}^{r(x,f_1)}\Gamma^{k}\cap \Gamma^{\alpha}\cap \Gamma_{0}$, then $f_i\Gamma=\Gamma$ for each $i=1,2$.
Then for any $x\in\Gamma$, $E_{j_1,j_2}(x)$, $E_{r(x,f_1)+1,j_2}(x)$, $E_{j_1,r(x,f_2)+1}$ and $E_{\alpha}(x)$ have desired properties and
$$X=\bigoplus_{j_1=1}^{r(x,f_1)}
\bigoplus_{j_2=1}^{r(x,f_2)}E_{j_1,j_2}(x)
\bigoplus_{j_2=1}^{r(x,f_2)}E_{r(x,f_1)+1,j_2}(x)
\bigoplus_{j_1=1}^{r(x,f_1)}E_{j_1,r(x,f_2)+1}\bigoplus E_{\alpha}(x).$$
We now show (\ref{0}) by claiming that for any $\epsilon>0$, $s_1,s_2\in \mathbb{Z}^+$, $1\leq j_i \leq r(x,f_i)$ and $i=1,2$, the set
$$
A_{\epsilon}=\{x\in\Gamma:\exists v_x\in E_{j_1,j_2}(x) \ \mbox{such that} \ \lambda(x,v_x,f_1^{s_1}\circ f_2^{s_2})-s_1\lambda_{j_1}(x,f_1)-s_2\lambda_{j_2}(x,f_2)>4\epsilon\}
$$
satisfies $\mu (A_{\epsilon})=0$ for all $\mu\in\mathcal{M}.$
Suppose it is not true. Then there exists a $\mu\in\mathcal{M}$ with $\mu (A_{\epsilon})>0$. Choose $C > 0$ such that the sets
$$
A_{1}=\{x\in A_{\epsilon}: \|d_x(f_1^{s_1}\circ f_2^{s_2})^nv_x\|\geq C^{-1}\|v_x\|\exp n(\lambda(x,v_x,f_1^{s_1}\circ f_2^{s_2})-\epsilon),\ \forall n\in \mathbb{Z}^+\},
$$
$$
A_{2}=\{x\in A_{\epsilon}:\|d_xf_2^{s_2 n}v\|\leq C\|v\|\exp n(s_2\lambda_{j_2}(x,f_2)+\epsilon),\ \forall v\in E_{j_1,j_2}(x), n\in \mathbb{Z}^+\}
$$
have measures larger than $\frac{1}{2}\mu (A_{\epsilon})$
. Then $\mu(A_{1}\cap A_{2})>0$. By Poincar$\acute{e}$ Recurrence Theorem we can
take $x\in A_{1}\cap A_{2}$ such that there exists a sufficient large integer $n > \frac{2\log C}{\epsilon}$ with $f_1^{s_1 n}x\in A_{1}\cap A_{2}$ and
$$
\|d_xf_1^{s_1 n}v\|\leq C\|v_x\|\exp n(s_1\lambda_{j_1}(x,f_1)+\epsilon),\ \forall v\in E_{j_1,j_2}(x).
$$
Since $d_xf_1^{s_1 n}v\in E_{j_1,j_2}(f_1^{s_1 n}x)$ and $f_1^{s_1 n}x\in A_2$, $\forall v\in E_{j_1,j_2}(x)$,
\begin{eqnarray*}
\|d_x(f_1^{s_1}\circ f_2^{s_2})^nv\|&=&\|d_{f_1^{s_1 n}x}f_2^{s_2n}d_xf_1^{s_1 n}v\|\\
&\leq&C\|d_xf_1^{s_1 n}v\|\exp n(s_2\lambda(f_1^{s_1 n}x,v,f_2)+\epsilon)\\
&\leq&C\|v\|\exp n(s_1\lambda(x,v,f_1)+s_2\lambda(f_1^{s_1 n}x,v,f_2)+2\epsilon),
\end{eqnarray*}
In particular, take $v=v_x$, then
$$
\|d_x(f_1^{s_1}\circ f_2^{s_2})^nv_x\|\leq C^{-1}\|v_x\|\exp n(\lambda(x,v_x,f_1^{s_1}\circ f_2^{s_2})-\epsilon),
$$
 which contradicts the fact $x\in A_{1}$.
Similar claim for the set
$$
B_{\epsilon}=\{x\in\Gamma:\exists v_x\in E_{j_1,j_2}(x) \ \mbox{such that} \ \lambda(x,v_x,f_1^{s_1}\circ f_2^{s_2})-s_1\lambda_{j_1}(x,f_1)-s_2\lambda_{j_2}(x,f_2)<4\epsilon\}
$$
is also true.
Then (\ref{0}) follows by these two claims.
Using the same idea, with some modification, we can prove (\ref{01}).
\end{proof}

\subsection{Proof of Theorem B}

\begin{lemma}\label{t1}
$l_{\alpha}(f)\leq \nu(f_1)l_{\alpha}(f_1)+\nu(f_2)l_{\alpha}(f_2).$
\end{lemma}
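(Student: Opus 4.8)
The plan is to exploit the commutativity of the generators to rewrite each random composition as a pure two-block composition, and then combine the submultiplicativity of $\|\cdot\|_{\alpha}$ with the subadditivity that defines $l_{\alpha}(f_i)$.

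First I would fix a typical $\omega=(\omega_k)\in\Omega$ and set $a_n(\omega)=\#\{0\le k<n:\omega_k=f_1\}$ and $b_n(\omega)=\#\{0\le k<n:\omega_k=f_2\}$, so that $a_n+b_n=n$. Since $f_1\circ f_2=f_2\circ f_1$, the interleaved composition $f(n,\omega)=f_{\theta^{n-1}\omega}\circ\cdots\circ f_{\omega}$ may be reordered as the pure block $f(n,\omega)=f_1^{a_n(\omega)}\circ f_2^{b_n(\omega)}$ as maps of $X$. Applying the chain rule gives $d_xf(n,\omega)=d_{f_2^{b_n}x}f_1^{a_n}\circ d_xf_2^{b_n}$, and since $f_2^{b_n}x\in K$ by the invariance $f_2(K)=K$, the submultiplicativity $\|g\circ h\|_{\alpha}\le\|g\|_{\alpha}\|h\|_{\alpha}$ from (\ref{e-2.1}) yields
\[
\sup_{x\in K}\|d_xf(n,\omega)\|_{\alpha}\le\Big(\sup_{y\in K}\|d_yf_1^{a_n}\|_{\alpha}\Big)\Big(\sup_{y\in K}\|d_yf_2^{b_n}\|_{\alpha}\Big).
\]

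Writing $\phi_i(m)=\log\sup_{y\in K}\|d_yf_i^m\|_{\alpha}$, inequality (\ref{e-2.1}) makes each $\phi_i$ subadditive, so $\phi_i(m)/m\to l_{\alpha}(f_i)$ as $m\to\infty$ by definition of $l_{\alpha}(f_i)$. Taking logarithms in the displayed bound and dividing by $n$,
\[
\frac1n\log\sup_{x\in K}\|d_xf(n,\omega)\|_{\alpha}\le\frac{a_n}{n}\cdot\frac{\phi_1(a_n)}{a_n}+\frac{b_n}{n}\cdot\frac{\phi_2(b_n)}{b_n}.
\]
By the strong law of large numbers, for $\mathbf{P}_{\nu}$-a.e.\ $\omega$ one has $a_n/n\to\nu(f_1)$ and $b_n/n\to\nu(f_2)$; when $\nu(f_i)>0$ this also forces $a_n,b_n\to\infty$, so $\phi_i(a_n)/a_n\to l_{\alpha}(f_i)$ and the corresponding term converges to $\nu(f_i)l_{\alpha}(f_i)$, while the degenerate case $\nu(f_i)=0$ leaves $\phi_i(\cdot)$ eventually evaluated at a bounded argument, contributing $0=\nu(f_i)l_{\alpha}(f_i)$. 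Passing to the limit and recalling that $l_{\alpha}(\omega,f)=l_{\alpha}(f)$ is non-random on a full-measure set gives $l_{\alpha}(f)\le\nu(f_1)l_{\alpha}(f_1)+\nu(f_2)l_{\alpha}(f_2)$.

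The one genuinely essential point, and the step I would flag as the crux, is the commutative reordering into $f_1^{a_n}\circ f_2^{b_n}$. Without it, the submultiplicative estimate applied to the interleaved composition only produces the single-step quantities $\log\sup_{y}\|d_yf_i\|_{\alpha}$, which in general strictly overestimate $l_{\alpha}(f_i)=\inf_m\phi_i(m)/m$; the regrouping is exactly what lets the sharp subadditive limit $l_{\alpha}(f_i)$, rather than its one-step bound, appear in the final inequality.
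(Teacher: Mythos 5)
Your proof is correct and follows essentially the same route as the paper's: both count the occurrences $n(\omega,f_i)$ of each generator, use commutativity to reorder $f(n,\omega)$ into the block composition $f_1^{a_n}\circ f_2^{b_n}$, apply submultiplicativity of $\|\cdot\|_{\alpha}$ together with the invariance of $K$, and then pass to the limit using the frequency convergence $a_n/n\to\nu(f_1)$, $b_n/n\to\nu(f_2)$. If anything, your write-up is slightly more careful than the paper's — you make the commutative reordering explicit (the paper leaves it implicit in its displayed inequality) and you handle the degenerate case $\nu(f_i)=0$, which the paper's formulation silently divides by.
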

\begin{proof}For any $j=1,2$, let $n(\omega,f_j)=\sum_{m=0}^{n-1} \chi_{f_j}f_{\theta^m\omega}$, by Birkhoff Ergodic Theorem $\int\lim_{n\longrightarrow \infty}\frac{1}{n}n(\omega,f_j)d \mathbf{P}_{\nu}=\nu(f_j),$ where $\chi_{f_j}$ is the character function of $f_j$.

For any $x\in \Gamma_f$, $\omega\in\Omega$,
\begin{eqnarray*}
&&\frac{1}{n}\log \sup_{x\in K}\|d_x f(n, \omega)\|_{\alpha}\\
&\leq& \frac{n(\omega,f_1)}{n}\frac{1}{n(\omega,f_1)}\log\sup_{x\in K}\|d_x f_1^{n(\omega,f_1)}\|_{\alpha}+\frac{n(\omega,f_2)}{n}\frac{1}{n(\omega,f_2)}\log\sup_{x\in K}\|d_{x} f_2^{n(\omega,f_2)}\|_{\alpha}.
\end{eqnarray*}
Let $n\rightarrow\infty$, and take integral with respect to $\mathbf{P}_{\nu}$, by Random Subadditive Ergodic Theorem \cite[Theorem 2.2]{K1}, we get the desired result.
\end{proof}

\begin{lemma}\cite[Lemma 3.2]{Hu}
For any $\epsilon>0$, there exists a measurable function $Q:\Gamma\rightarrow[1,\infty)$ such that for any $t_1,t_2\in\mathbb{N}$, $x\in\Gamma$, $0\neq v\in E_{j_1,j_2}(x)$, $1\leq j_1\leq r(x,f_1)$, $1\leq j_2\leq r(x,f_2)$,
\begin{eqnarray*}
&&Q(x)^{-1}\|v\|\exp(t_1\lambda_{j_1}(x, f_1)+t_2\lambda_{j_2}(x, f_2)-3(t_1+t_2)\epsilon)\\
&\leq&\|d_x f_1^{t_1}\circ f_2^{t_2} v\|\\
&\leq& Q(x)\|v\|\exp(t_1\lambda_{j_1}(x, f_1)+t_2\lambda_{j_2}(x, f_2)+3(t_1+t_2)\epsilon).
\end{eqnarray*}
\end{lemma}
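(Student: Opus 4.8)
The plan is to construct $Q$ as a uniform (in $t_1,t_2$ and in the direction $v$) Lyapunov tempering constant for the finite-dimensional blocks $E_{j_1,j_2}(x)$, and to take the maximum over the finitely many nonzero blocks at the end. The key structural fact I will use is that each nonzero $E_{j_1,j_2}(x)$ is a \emph{single-exponent block for each generator}: applying Theorem A(a) with $(s_1,s_2)=(1,0)$ and $(s_1,s_2)=(0,1)$ shows that every $0\neq v\in E_{j_1,j_2}(x)$ has $f_i$-Lyapunov exponent exactly $\lambda_{j_i}(x,f_i)$. Since $E_{j_1,j_2}(x)$ is finite-dimensional (it lies in the finite-dimensional $E_{j_1}(x,f_1)$), its unit sphere is compact, and having a single $f_i$-exponent forces the limits $\tfrac1t\log\|d_xf_i^t v\|\to\lambda_{j_i}(x,f_i)$ to be uniform over that sphere (the operator norm and the conorm of $d_xf_i^t|_{E_{j_1,j_2}(x)}$ share the rate $\lambda_{j_i}$).

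Step 1 (single-generator tempered bounds). Fix $i\in\{1,2\}$. Using the uniform convergence just noted, together with the measurability of the splitting (Theorem A(d)) and the continuity of $d_xf_i^t$ for the finitely many small $t$, the quantity
$$
C_i(x):=\sup_{t\in\mathbb{N}}\ \sup_{0\neq v\in E_{j_1,j_2}(x)}\max\Big\{\frac{\|d_x f_i^t v\|}{\|v\|\,e^{t(\lambda_{j_i}(x,f_i)+\epsilon)}},\ \frac{\|v\|\,e^{t(\lambda_{j_i}(x,f_i)-\epsilon)}}{\|d_x f_i^t v\|}\Big\}
$$
is finite and measurable for $\mu$-a.e. $x$.

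Step 2 (tempering along both orbits) and Step 3 (combining the parameters). By Theorem A(e) the block exponents are $f_{i'}$-invariant and $d_xf_{i'}E_{j_1,j_2}(x)=E_{j_1,j_2}(f_{i'}x)$, and by Theorem A(g) the projections $\pi_{j_1,j_2}$ drift subexponentially; these give $\lim_n\tfrac1n\log C_i(f_{i'}^n x)=0$ a.e. for each $i'\in\{1,2\}$. Feeding this into the standard tempering-kernel (slowly-varying) construction for the commuting, $\mu$-preserving pair $(f_1,f_2)$ produces a measurable $Q(x)\ge\max_i C_i(x)$ with $Q(f_i x)\le e^{\epsilon}Q(x)$, hence $Q(f_i^t x)\le e^{t\epsilon}Q(x)$ for all $t\in\mathbb{N}$ and both $i$. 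I then write $d_x f_1^{t_1}\circ f_2^{t_2}v=d_{f_2^{t_2}x}f_1^{t_1}\big(d_x f_2^{t_2}v\big)$: the inner factor is bounded at base point $x$ by $\|d_x f_2^{t_2}v\|\le Q(x)\|v\|e^{t_2(\lambda_{j_2}(x,f_2)+\epsilon)}$, and $w:=d_x f_2^{t_2}v\in E_{j_1,j_2}(f_2^{t_2}x)$; the outer factor is bounded at base point $f_2^{t_2}x$ using $\lambda_{j_1}(f_2^{t_2}x,f_1)=\lambda_{j_1}(x,f_1)$ and $C_1(f_2^{t_2}x)\le Q(f_2^{t_2}x)\le e^{t_2\epsilon}Q(x)$. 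Multiplying these and absorbing the extra $e^{t_2\epsilon}$ together with the two $\epsilon$-slacks into the stated budget $3(t_1+t_2)\epsilon$ yields the upper estimate; the lower estimate is obtained symmetrically from the reciprocal terms in $C_i$, after possibly enlarging $Q$ by a bounded factor.

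The main obstacle is Step 2: the bound must be uniform along the $f_1$- and the $f_2$-orbit \emph{simultaneously}, so a one-dimensional tempering kernel is not enough. I would resolve this by applying the tempering construction to the $\mathbb{N}^2$-action generated by the commuting pair (equivalently, tempering with respect to each generator and passing to a common slowly varying majorant), which is legitimate precisely because Corollary \ref{2} and Theorem A(e) guarantee that the target exponents $\lambda_{j_i}(\cdot,f_i)$ and the multiplicities do not move when the base point is shifted by the other generator. A secondary point to monitor is that the intermediate estimate is taken at the shifted base point $f_2^{t_2}x$, so its slack grows like $t_2\epsilon$; this is exactly why the statement tolerates the generous error $3(t_1+t_2)\epsilon$ rather than a fixed additive constant.
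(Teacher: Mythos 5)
The paper gives no proof of this lemma at all: it is imported verbatim from Hu \cite[Lemma 3.2]{Hu}, where it is proved for commuting diffeomorphisms of a compact \emph{finite-dimensional} manifold. So your attempt must be judged as a self-contained argument rather than against a proof in the text. Your Steps 1 and 3 are fine in outline: each nonzero block $E_{j_1,j_2}(x)$ is finite-dimensional and carries a single exponent for each generator, so the sphere-compactness/conorm argument gives the pointwise constants $C_i(x)$, and the factorization $d_xf_1^{t_1}\circ f_2^{t_2}v=d_{f_2^{t_2}x}f_1^{t_1}\bigl(d_xf_2^{t_2}v\bigr)$ together with $\lambda_{j_1}(f_2^{t_2}x,f_1)=\lambda_{j_1}(x,f_1)$ correctly reduces everything to controlling $C_1$ at the shifted base point, with the extra $e^{t_2\epsilon}$ absorbable into the $3(t_1+t_2)\epsilon$ budget.

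The genuine gap is Step 2, exactly at the point you flagged as the main obstacle. The claim $\lim_n\frac1n\log C_1(f_2^nx)=0$ a.e.\ does not follow from Theorem A(e) and A(g): invariance of exponents and blocks, and subexponential drift of projections, say nothing about the size of the uniformity defect $C_1$ at shifted points. A merely measurable, a.e.-finite function need not grow subexponentially along orbits of a measure-preserving map; that conclusion requires $\log C_1\in L^1(\mu)$ (which you never establish) or a dynamical argument. The natural cocycle estimate $C_1(f_2y)\le \sup_{z\in K}\|d_{z}f_2\|\cdot\|(d_yf_2|_{E_{j_1,j_2}(y)})^{-1}\|\cdot C_1(y)$ does not close the argument either: in the paper's infinite-dimensional setting the one-step conorm of $d_yf_2$ on the moving blocks has no uniform positive lower bound (H0 does not exclude $d_yf_2$ being a compact operator), and even granting such bounds the iteration gives $C_1(f_2^ny)\le C_1(y)\prod_{k<n}D(f_2^ky)$, i.e.\ exponential growth at rate $\int\log D\,d\mu$, which need not be zero. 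In fact, the assertion that the $f_1$-defect is subexponential along $f_2$-orbits is essentially equivalent to the two-parameter convergence $\frac{1}{t_1+t_2}\bigl(\log\|d_xf_1^{t_1}\circ f_2^{t_2}v\|-t_1\lambda_{j_1}(x,f_1)-t_2\lambda_{j_2}(x,f_2)\bigr)\to 0$ as $t_1+t_2\to\infty$, which is the real content of the lemma (Hu's Lemma 3.1); Hu obtains it directly by interpolating the ray-wise limits (the analogue of Theorem A(a)) over finitely many directions, using uniform one-step bounds available on a compact manifold. So your proposal locates the crux correctly but resolves it circularly: the tempering kernel you invoke has as its hypothesis the very statement that needs to be proved.
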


\begin{lemma}\label{dengshi}
For any $x\in\Gamma_f\cap\Gamma$, $1\leq j_1\leq r(x,f_1)$, $1\leq j_2\leq r(x,f_2)$, $\mathbf{P}_{\nu}$-a.e. $\omega\in\Omega$,
$$
\lim_{n\rightarrow\infty}\frac{1}{n}\log\|d_xf(n,\omega)|_{E_{j_1,j_2}(x)}\|
=\nu(f_1)\lambda_{j_1}(x,f_1)+\nu(f_2)\lambda_{j_2}(x,f_2).
$$
\end{lemma}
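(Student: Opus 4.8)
The plan is to exploit the commutativity of the generators to collapse the random composition into a product of two commuting powers, and then to split the growth rate into a deterministic contribution controlled by the preceding lemma (the one quoted from \cite[Lemma 3.2]{Hu}) and a probabilistic contribution governed by the law of large numbers.

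First I would record the key structural fact. Since $f_1\circ f_2=f_2\circ f_1$, the composition $f(n,\omega)=f_{\theta^{n-1}\omega}\circ\cdots\circ f_\omega$ may be reordered freely, so as maps of $X$ we have $f(n,\omega)=f_1^{n(\omega,f_1)}\circ f_2^{n(\omega,f_2)}$, where $n(\omega,f_j)$ is the number of times $f_j$ occurs among $\omega_0,\dots,\omega_{n-1}$ (the same counting function appearing in the proof of Lemma~\ref{t1}) and $n(\omega,f_1)+n(\omega,f_2)=n$. Because these are equal as functions of $X$, their Fr\'echet derivatives at $x$ coincide, so $d_x f(n,\omega)=d_x(f_1^{n(\omega,f_1)}\circ f_2^{n(\omega,f_2)})$.

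Next I would apply the previous lemma with $t_1=n(\omega,f_1)$ and $t_2=n(\omega,f_2)$. Fixing $\epsilon>0$ and taking the supremum over unit vectors $v\in E_{j_1,j_2}(x)$ in both its upper and lower bounds (the constant $Q(x)$ and the exponents being independent of $v$), the operator norm restricted to $E_{j_1,j_2}(x)$ is trapped between $Q(x)^{\pm1}\exp\bigl(n(\omega,f_1)\lambda_{j_1}(x,f_1)+n(\omega,f_2)\lambda_{j_2}(x,f_2)\pm 3n\epsilon\bigr)$, using $t_1+t_2=n$. Taking logarithms and dividing by $n$ shows that, for every $n$, the quantity $\frac{1}{n}\log\|d_xf(n,\omega)|_{E_{j_1,j_2}(x)}\|$ lies within $3\epsilon+\frac{1}{n}\log Q(x)$ of $\frac{n(\omega,f_1)}{n}\lambda_{j_1}(x,f_1)+\frac{n(\omega,f_2)}{n}\lambda_{j_2}(x,f_2)$.

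Finally I would pass to the limit. By Birkhoff's ergodic theorem applied to the Bernoulli shift $(\Omega,\theta,\mathbf{P}_\nu)$, exactly as in Lemma~\ref{t1}, there is a single full-measure set of $\omega$ (independent of $\epsilon$) on which $n(\omega,f_j)/n\to\nu(f_j)$ for $j=1,2$; on this set $\frac{1}{n}\log Q(x)\to 0$ as well. Letting $n\to\infty$ then forces both the $\limsup$ and the $\liminf$ of the quantity in question to bracket $\nu(f_1)\lambda_{j_1}(x,f_1)+\nu(f_2)\lambda_{j_2}(x,f_2)$ to within $3\epsilon$, and since $\epsilon>0$ is arbitrary the limit exists and equals the stated value. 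The argument is essentially mechanical once these inputs are assembled; the only point requiring care is the order of quantifiers, namely that the law-of-large-numbers null set is chosen before and independently of $\epsilon$, which is legitimate because the frequency limits do not involve $\epsilon$. The genuinely essential step is the commutativity reduction of the first paragraph, without which $f(n,\omega)$ could not be written as a single product of powers and the quoted lemma would not apply directly.
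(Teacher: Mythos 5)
Your proposal is correct and takes essentially the same route as the paper: both apply the lemma quoted from \cite[Lemma 3.2]{Hu} with $t_1=n(\omega,f_1)$, $t_2=n(\omega,f_2)$, sandwich $\frac{1}{n}\log\|d_xf(n,\omega)|_{E_{j_1,j_2}(x)}\|$ between $\frac{n(\omega,f_1)}{n}\lambda_{j_1}(x,f_1)+\frac{n(\omega,f_2)}{n}\lambda_{j_2}(x,f_2)\pm\bigl(3\epsilon+\frac{1}{n}\log Q(x)\bigr)$, and conclude by the law of large numbers for the frequencies $n(\omega,f_j)/n$. If anything, your write-up is more careful than the paper's, which leaves the commutativity reordering $f(n,\omega)=f_1^{n(\omega,f_1)}\circ f_2^{n(\omega,f_2)}$ implicit and closes with an integration against $\mathbf{P}_{\nu}$ via the Random Subadditive Ergodic Theorem, whereas your pointwise Birkhoff argument (with the quantifier order on the null set and $\epsilon$ spelled out) delivers the almost-sure statement directly.
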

\begin{proof}
For any $1\leq j_1\leq r(x,f_1)$, $1\leq j_2\leq r(x,f_2)$, let $n(\omega,f_j)=\sum_{m=0}^{n-1} \chi_{f_j}f_{\theta^m\omega}$, by Birkhoff Ergodic Theorem $\int\lim_{n\longrightarrow \infty}\frac{1}{n}n(\omega,f_j)d \mathbf{P}_{\nu}=\nu(f_j).$

For any  $\epsilon>0$, $x\in \Gamma_f$, $\omega\in\Omega$, $v\in E_{j_1,j_2}$,
\begin{eqnarray*}
&&\frac{1}{n}(\log Q(x)^{-1}+ \log \|v\|)+ \frac{n(\omega,f_1)}{n}\lambda_{j_1}(x, f_1)+\frac{n(\omega,f_2)}{n}\lambda_{j_2}(x, f_2)-3\epsilon\\
&\leq&\frac{1}{n}\log\|d_xf(n,\omega)v\|\\
&\leq&\frac{1}{n}(\log Q(x)+ \log \|v\|)+ \frac{n(\omega,f_1)}{n}\lambda_{j_1}(x, f_1)+\frac{n(\omega,f_2)}{n}\lambda_{j_2}(x, f_2)+3\epsilon.
\end{eqnarray*}

Let $n\rightarrow\infty$, and take integral with respect to $\mathbf{P}_{\nu}$, by Random Subadditive Ergodic Theorem \cite[Theorem 2.2]{K1}, we get the desired result.
\end{proof}
By a similar argument, we have the following corollary.
\begin{coro}\label{jia1}
For any $x\in\Gamma_f\cap\Gamma$, $1\leq j_1\leq r(x,f_1)$, $1\leq j_2\leq r(x,f_2)$, $\mathbf{P}_{\nu}$-a.e. $\omega\in\Omega$,
$$
\lim_{n\rightarrow\infty}\frac{1}{n}\log\|d_xf(n,\omega)|_{E_{r(x,f_1)+1,j_2}}\|
\leq\nu(f_1)\lambda_{\alpha}+\nu(f_2)\lambda_{j_2}(x,f_2)
$$
and
$$
\lim_{n\rightarrow\infty}\frac{1}{n}\log\|d_xf(n,\omega)|_{E_{j_1,r(x,f_2)+1}}\|
\leq\nu(f_1)\lambda_{j_1}(x,f_1)+\nu(f_2)\lambda_{\alpha}.
$$
\end{coro}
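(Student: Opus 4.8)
The plan is to establish the first inequality in full; the second then follows verbatim after interchanging the roles of $f_1$ and $f_2$ (and of the indices $j_1$, $j_2$), using the symmetric invariance statements in Theorem A(f). Throughout I fix $x\in\Gamma_f\cap\Gamma$ and a pair $(r(x,f_1)+1,j_2)$ with $1\le j_2\le r(x,f_2)$, and work on $E:=E_{r(x,f_1)+1,j_2}(x)$. Since Theorem A(f) gives $df_1(x)E_{r(x,f_1)+1,j_2}(x)\subset E_{r(x,f_1)+1,j_2}(f_1x)$ and $df_2(x)E_{r(x,f_1)+1,j_2}(x)=E_{r(x,f_1)+1,j_2}(f_2x)$, the family $\{E_{r(\cdot,f_1)+1,j_2}\}$ is forward invariant under both generators, hence under the whole cocycle $d_xf(n,\omega)$. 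Consequently $n\mapsto\log\|d_xf(n,\omega)|_{E}\|$ is subadditive, so the limit on the left-hand side exists for $\mathbf{P}_\nu$-a.e.\ $\omega$ by the Random Subadditive Ergodic Theorem \cite[Theorem 2.2]{K1}; it remains only to bound it from above.

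The key step is a one-sided analogue of the interpolation lemma preceding Lemma \ref{dengshi}: for every $\epsilon>0$ there is a measurable $Q\colon\Gamma\to[1,\infty)$ such that for all $t_1,t_2\in\mathbb{N}$ and $0\neq v\in E_{r(x,f_1)+1,j_2}(x)$,
$$
\|d_x(f_1^{t_1}\circ f_2^{t_2})v\|\le Q(x)\,\|v\|\exp\bigl(t_1\lambda_{\alpha}+t_2\lambda_{j_2}(x,f_2)+3(t_1+t_2)\epsilon\bigr).
$$
To prove this I would write $d_x(f_1^{t_1}\circ f_2^{t_2})=d_{f_2^{t_2}x}f_1^{t_1}\circ d_xf_2^{t_2}$, bound $d_xf_2^{t_2}$ on $E$ using the exact $f_2$-exponent $\lambda_{j_2}(x,f_2)$ from Theorem A(c), and then bound $d_{f_2^{t_2}x}f_1^{t_1}$ on the image (which again lies in $E_{r(\cdot,f_1)+1,j_2}$ by the invariance above) using the $f_1$-exponent $\le\lambda_{\alpha}$, also from Theorem A(c). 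The subexponential deviations accumulated along the orbit are absorbed into the single measurable factor $Q(x)$ by exactly the tempering construction of \cite[Lemma 3.2]{Hu}; the only change is that the two-sided bound there is weakened to an upper bound, which is harmless.

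With this estimate available, I would repeat the computation in the proof of Lemma \ref{dengshi}. Commutativity of $f_1,f_2$ gives $f(n,\omega)=f_1^{n(\omega,f_1)}\circ f_2^{n(\omega,f_2)}$, so taking $t_1=n(\omega,f_1)$ and $t_2=n(\omega,f_2)$ in the one-sided estimate yields
$$
\frac1n\log\|d_xf(n,\omega)|_{E_{r(x,f_1)+1,j_2}}\|\le\frac1n\log Q(x)+\frac{n(\omega,f_1)}{n}\lambda_{\alpha}+\frac{n(\omega,f_2)}{n}\lambda_{j_2}(x,f_2)+3\epsilon.
$$
Letting $n\to\infty$, the Birkhoff Ergodic Theorem gives $n(\omega,f_i)/n\to\nu(f_i)$ for $\mathbf{P}_\nu$-a.e.\ $\omega$ while $\tfrac1n\log Q(x)\to0$, and then letting $\epsilon\to0$ produces the first inequality. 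The second is obtained by the symmetric argument on $E_{j_1,r(x,f_2)+1}$.

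I expect the main obstacle to be the one-sided interpolation estimate, specifically the uniform control of $Q(x)$ for all $t_1,t_2$ simultaneously when only the upper bound $\lambda_{\alpha}$ is available in the $f_1$-direction: one must check that discarding the lower bound in the tempering argument of \cite[Lemma 3.2]{Hu} still yields a genuinely subexponential $Q$, so that the term $\tfrac1n\log Q(x)$ vanishes in the limit. Once this is in place, the remaining Birkhoff passage to the limit is routine.
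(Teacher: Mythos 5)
Your proposal is correct and takes essentially the same route as the paper: the paper disposes of this corollary with the single line ``by a similar argument'' referring to Lemma \ref{dengshi}, i.e.\ a one-sided version of the interpolation estimate \cite[Lemma 3.2]{Hu} on the mixed subspaces combined with the Birkhoff limits $n(\omega,f_i)/n\to\nu(f_i)$ and the tempered factor $\tfrac1n\log Q(x)\to 0$, which is exactly your argument. Your extra care (forward invariance of $E_{r(\cdot,f_1)+1,j_2}$ under the cocycle, existence of the limit via the Random Subadditive Ergodic Theorem, and the check that the one-sided tempering still yields a subexponential $Q$) only makes explicit what the paper leaves implicit.
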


Since $X$ is a separable  Hilbert space, we can
characterize  $V_{p}(\cdot)$ in
(\ref{tiji})  by exterior power. Denote by $X^{\wedge p}$ the $p$-th
exterior power space of $X$, i.e., the collection of completely
antisymmetric elements of the Hilbert space of tensor product of $p$
copies of $X$.  Let  $\{\xi_{i}\}_{i=1}^{\infty}$ be a countable
orthonormal base  of $X$. Then
\[
\{\xi_{i_1}\wedge \cdots \wedge \xi_{i_p}:\ \ 1\leq i_1<i_2<\cdots
<i_p<\infty\}
\]
is a basis of $X^{\wedge p}$. Define an inner product $\left<\cdot,
\cdot \right>$ on $X^{\wedge p}$ by letting
\[
\left<\xi_{i_1}\wedge \cdots \wedge \xi_{i_p}, \xi_{j_1}\wedge
\cdots \wedge \xi_{j_p}\right>=\left\{
                                 \begin{array}{ll}
                                   1, & \hbox{if}\ (i_1,\cdots, i_p)=(j_1,\cdots, j_p); \\
                                   0, & \hbox{otherwise.}
                                 \end{array}
                               \right.
\]
This inner product is independent of the choice of the orthonormal
basis $\{\xi_i\}_{i=1}^{\infty}$. Denote by $|\cdot|$ the norm on
$X^{\wedge p}$ induced by this inner product.  For any vectors $\xi_{1},\cdots, \xi_p$ of $X$,  $|\xi_{1}\wedge \cdots \wedge
\xi_{p}|$ is just the volume of the parallelotope formed by these vectors and its square is the classical Gram determinant of the vectors.
So we have

\begin{lemma}\cite[Lemma 2.3]{l1}\label{tiji1}
$V_{p}(\xi_{1}, \cdots, \xi_{p})=|\xi_{1}\wedge \cdots \wedge
\xi_{p}|$, for $\xi_{1}\wedge \cdots \wedge \xi_{p}\in X^{\wedge
p}$.
\end{lemma}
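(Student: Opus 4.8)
The plan is to prove that the two quantities agree by showing that each equals the square root of the Gram determinant $G=(\langle\xi_i,\xi_j\rangle)_{1\le i,j\le p}$. The text already records that $|\xi_1\wedge\cdots\wedge\xi_p|^2=\det G$; this follows at once from the defining inner product on $X^{\wedge p}$, since expanding $\langle \xi_{1}\wedge\cdots\wedge\xi_{p},\ \xi_{1}\wedge\cdots\wedge\xi_{p}\rangle$ by multilinearity and the orthonormal-basis formula produces exactly $\det G$. Hence it suffices to establish the identity $V_p(\xi_1,\dots,\xi_p)^2=\det G$. Because only finitely many vectors are involved, all computations take place inside the finite-dimensional subspace $\mathrm{span}\{\xi_1,\dots,\xi_p\}$, so I may use finite-dimensional linear algebra freely. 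I first dispose of the degenerate case: if $\xi_1,\dots,\xi_p$ are linearly dependent, pick the least index $i$ occurring with nonzero coefficient in a nontrivial relation; then $\xi_i\in\mathrm{span}\{\xi_{i+1},\dots,\xi_p\}$, so $\mathrm{dist}(\xi_i,\mathrm{span}\{\xi_{i+1},\dots,\xi_p\})=0$ and $V_p=0$, while $\det G=0$ as well, so both sides vanish.

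Assume now $\xi_1,\dots,\xi_p$ are linearly independent. The key construction is a reverse Gram--Schmidt process. For $1\le i\le p-1$ set $W_i=\mathrm{span}\{\xi_{i+1},\dots,\xi_p\}$ and $W_p=\{0\}$, let $P_{W_i}$ be the orthogonal projection onto $W_i$, and put
\[
e_i=\xi_i-P_{W_i}\xi_i,\qquad h_i=\|e_i\|=\mathrm{dist}(\xi_i,W_i),
\]
so in particular $e_p=\xi_p$ and $h_p=\|\xi_p\|$. By construction $e_i\perp W_i$, and since $e_{i+1},\dots,e_p$ all lie in $W_i$, the family $\{e_i\}_{i=1}^p$ is orthogonal. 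Moreover each $\xi_i$ differs from $e_i$ only by an element of $W_i=\mathrm{span}\{\xi_{i+1},\dots,\xi_p\}=\mathrm{span}\{e_{i+1},\dots,e_p\}$, so the change-of-basis matrix $A=(A_{ij})$ defined by $\xi_i=\sum_j A_{ij}e_j$ is upper triangular with $A_{ii}=1$; thus $\det A=1$.

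Finally I combine these facts. Writing $D=\mathrm{diag}(h_1^2,\dots,h_p^2)$ and using orthogonality of the $e_j$, I obtain
\[
G_{ik}=\langle\xi_i,\xi_k\rangle=\sum_{j}A_{ij}A_{kj}\|e_j\|^2=(ADA^{T})_{ik},
\]
hence $G=ADA^{T}$ and $\det G=(\det A)^2\det D=\prod_{i=1}^p h_i^2$. Since $\prod_{i=1}^p h_i^2=\big(\prod_{i=1}^{p-1}\mathrm{dist}(\xi_i,W_i)\big)^2\|\xi_p\|^2=V_p(\xi_1,\dots,\xi_p)^2$, the identity $V_p^2=\det G=|\xi_1\wedge\cdots\wedge\xi_p|^2$ follows, and taking square roots gives the lemma. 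The only genuinely delicate point is verifying that the reverse Gram--Schmidt family is orthogonal and that the transition matrix is unitriangular; once this is in place, everything reduces to the determinant factorization $G=ADA^{T}$, which is routine.
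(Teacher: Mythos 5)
Your proof is correct, and it takes essentially the route the paper itself indicates: the paper does not reprove this lemma (it is cited from \cite[Lemma 2.3]{l1}) but justifies it by the remark that $|\xi_1\wedge\cdots\wedge\xi_p|^2$ is the Gram determinant and $V_p$ is the parallelotope volume, which is exactly the identity $V_p^2=\det G=|\xi_1\wedge\cdots\wedge\xi_p|^2$ you establish. Your reverse Gram--Schmidt factorization $G=ADA^{T}$ simply supplies the details behind that one-line observation.
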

Given a $C^1$ map $T:\  U\to X$ for some open subset $U$ of $X$,
define for $x\in U$, $p\in \Bbb N$,
\begin{eqnarray*}
  (d_x T)^{\wedge p}: && \ X^{\wedge p}\to X^{\wedge p}\\
&& \xi_{1}\wedge \cdots \wedge \xi_{p}\to (d_x T)\xi_{1}\wedge
\cdots \wedge (d_x T)\xi_{p}.
\end{eqnarray*}

It is true by Lemma \ref{tiji1} that
\begin{eqnarray*}
  |(d_x T)^{\wedge p}|=\sup_{\|\xi_i\|=1, \ 1\leq i\leq p} V_{p}((d_x T)\xi_1,\cdots, (d_x T)
\xi_p)=V_{p}(d_x T).
\end{eqnarray*}
As a consequence, we have for $f$ as in
Theorem \ref{thm2.1}, if $X$ is a separable Hilbert space, then for
$\mathbf{P}_{\nu}$-a.e $\omega\in \Omega$, $x\in \Gamma_f$ and $p\leq \Sigma_{j=1}^{r(x,f)} m_j(x,f),$
\begin{equation}\label{tiji2}
\lim\limits_{n\rightarrow \infty}\frac{1}{n}\log \left|(d_x f(n,
\omega))^{\wedge p}\right|=\sum_{k=1}^{p}\widetilde{\lambda}_k (x,f),
\end{equation}
where $\{\widetilde{\lambda}_k(x,f)\}$ are $\lambda_j(
    x,f)$'s repeated with multiplicity $m_j(x,f)$.
Put
$$
\lambda_{\alpha}=0>\max\{l_{\alpha}(f_1), l_{\alpha}(f_2)\},
 $$
 $$
 E^u(x, f_1):=\oplus_{j=1}^{r(x,f_1)} E_j(x, f_1),\;\;E^u(x, f_2):=\oplus_{j=1}^{r(x,f_2)} E_j(x, f_2),
 $$
and
$$
M(x,f_1):=\sum_{j=1}^{r(x,f_1)}m_j(x,f_1),\,\,M(x,f_2):=\sum_{j=1}^{r(x,f_2)}m_j(x,f_2).
 $$
Similarly, let
$$
E^u(\omega, x, f):=\oplus_{j=1}^{r(x,f)} E_i(\omega,x, f)\;\text{and}\; M(x,f):=\sum_{j=1}^{r(x,f)}m_j(x,f).
$$
%Since $
%X=\bigoplus_{j_1=1}^{r(x,f_1)}
%\bigoplus_{j_2=1}^{r(x,f_2)}E_{j_1,j_2}(x)
%\bigoplus_{j_2=1}^{r(x,f_2)}E_{r(x,f_1)+1,j_2}(x)
%\bigoplus_{j_1=1}^{r(x,f_1)}E_{j_1,r(x,f_2)+1}\bigoplus E_{\alpha}(x)$ and
%Next we need a detailed analysis of  $E^u(x, f_1)$ and $E^u(x, f_2).$
%First, it is clear that
%By Lemma \ref{t1} and assumption (H1),
Next, let $
\underline{E^{u}}(x):=\bigoplus_{j_1=1}^{r(x,f_1)}
\bigoplus_{j_2=1}^{r(x,f_2)}E_{j_1,j_2}(x)
$, $
\underline{M(x)}:=dim \underline{E^{u}}(x),$$$
\overline{E^{u}}(x):=\bigoplus_{j_1=1}^{r(x,f_1)}
\bigoplus_{j_2=1}^{r(x,f_2)}E_{j_1,j_2}(x)\bigoplus_{j_2=1}^{r(x,f_2)}E_{r(x,f_1)+1,j_2}(x)
\bigoplus_{j_1=1}^{r(x,f_1)}E_{j_1,r(x,f_2)+1},
$$
and $\overline{M(x)}:=dim \overline{E^{u}}(x).$
Moreover, $d_{j_1}(x,f_1)=m_{j_1}(x,f_1)-dim E_{j_1,r(x,f_2)+1}(x)$, $d_{j_2}(x,f_2)=m_{j_2}(x,f_2)-dim E_{r(x,f_1)+1,j_2}(x)$.
\begin{lemma}\label{srb}
Let $f$ be a finitely generated random transformation of an infinite dimensional Banach
  space $X$ over $(\Omega, \mathcal{A}, \mathbf{P}_{\nu}, \theta)$. Suppose $\mu\in \mathcal{M}$ and $(f_i,\mu)$ satisfies (H0-H1), then  $$\int \sum_{i=1}^2\sum_{\lambda_{j_k}(x,f_i)>0}\nu(f_i)\lambda_{j_k}(x,f_i)d_{j_k}(x,f_i)d\mu\leq$$
$$\int\log|\mbox{det}(D_x f_{w}|_{E^u(w, x)})|d\mathbf{P}_{\nu}\times\mu\leq \int \sum_{i=1}^2\sum_{\lambda_{j_k}(x,f_i)>0}\nu(f_i)\lambda_{j_k}(x,f_i)m_{j_k}(x,f_i)d\mu.
$$
\end{lemma}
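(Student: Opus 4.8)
The plan is to read the middle quantity as the integrated sum of positive Lyapunov exponents of the random transformation $f$, and then to extract that sum from the joint splitting produced by Theorem A. First I would note that $n\mapsto\log|\det(d_xf(n,\omega)|_{E^u(\omega,x,f)})|$ is an additive cocycle over the skew product $F$ whose one-step generator is precisely $\log|\det(D_xf_\omega|_{E^u(\omega,x)})|$. Since $E^u(\omega,x,f)$ is finite-dimensional and $d_xf_\omega$-invariant, its top volume growth coincides with $V_{M(x,f)}(d_xf(n,\omega))$, so by Lemma \ref{tiji1} and the exterior-power identity (\ref{tiji2}) with $p=M(x,f)=\dim E^u(\omega,x,f)$ its Birkhoff average equals $\sum_{\lambda_j(x,f)>0}\lambda_j(x,f)m_j(x,f)$. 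Integrating against the $F$-invariant measure $\mathbf{P}_\nu\times\mu$ and using Birkhoff's ergodic theorem yields the identity
\[
\int\log|\det(D_xf_\omega|_{E^u(\omega,x)})|\,d\mathbf{P}_\nu\times\mu=\int\sum_{\lambda_j(x,f)>0}\lambda_j(x,f)m_j(x,f)\,d\mu,
\]
so it suffices to sandwich the pointwise sum $g(x):=\sum_{\lambda_j(x,f)>0}\lambda_j(x,f)m_j(x,f)$ between the two integrands built from the generators.

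Next I would compute $g(x)$ using the decomposition of Theorem A. Each block $E_{j_1,j_2}(x)$, $E_{r(x,f_1)+1,j_2}(x)$, $E_{j_1,r(x,f_2)+1}(x)$, $E_\alpha(x)$ is (forward-)invariant under both $d_xf_1$ and $d_xf_2$ by parts (e)--(f), so $d_xf(n,\omega)$ is block-triangular and the Lyapunov spectrum of $f$ is the union of the spectra on the blocks. On $E_{j_1,j_2}(x)$, Lemma \ref{dengshi} gives the single random exponent $\nu(f_1)\lambda_{j_1}(x,f_1)+\nu(f_2)\lambda_{j_2}(x,f_2)>0$, so this block lies entirely in the random unstable space and contributes $(\nu(f_1)\lambda_{j_1}(x,f_1)+\nu(f_2)\lambda_{j_2}(x,f_2))\dim E_{j_1,j_2}(x)$ to $g(x)$. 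Summing over $j_1,j_2$ and using $\sum_{j_2}\dim E_{j_1,j_2}(x)=d_{j_1}(x,f_1)$ and $\sum_{j_1}\dim E_{j_1,j_2}(x)=d_{j_2}(x,f_2)$ shows that the $\underline{E^u}(x)$-part of $g(x)$ is exactly the left-hand integrand $\sum_i\sum_{j_k}\nu(f_i)\lambda_{j_k}(x,f_i)d_{j_k}(x,f_i)$.

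It then remains to control the mixed blocks. By Corollary \ref{jia1} every exponent of $f$ on $E_{j_1,r(x,f_2)+1}(x)$ is at most $\nu(f_1)\lambda_{j_1}(x,f_1)$ and on $E_{r(x,f_1)+1,j_2}(x)$ at most $\nu(f_2)\lambda_{j_2}(x,f_2)$, while part (d) of Theorem A forces all exponents on $E_\alpha(x)$ to be $\le 0$. Hence the sum of positive exponents carried by $E_{j_1,r(x,f_2)+1}(x)$ is a number $c_{j_1}$ with $0\le c_{j_1}\le\nu(f_1)\lambda_{j_1}(x,f_1)\dim E_{j_1,r(x,f_2)+1}(x)$, similarly for $E_{r(x,f_1)+1,j_2}(x)$, and $E_\alpha(x)$ contributes nothing. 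Writing $g(x)$ as its $\underline{E^u}(x)$-part plus these nonnegative remainders gives $g(x)\ge$ the left-hand integrand immediately; replacing each remainder by its upper bound and using $m_{j_k}(x,f_i)=d_{j_k}(x,f_i)+\dim(\text{corresponding mixed block})$ turns the right-hand side into $\sum_i\sum_{j_k}\nu(f_i)\lambda_{j_k}(x,f_i)m_{j_k}(x,f_i)$, so $g(x)\le$ the right-hand integrand. Integrating both pointwise inequalities against $\mu$ finishes the proof.

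The hard part, I expect, is the first step rather than the subsequent bookkeeping: one must justify that $E^u(\omega,x,f)$ is genuinely a cocycle-invariant subspace of constant finite dimension on which $|\det(D_xf_\omega|_{E^u})|$ is well-defined, and that the block-triangular structure really lets the Lyapunov spectrum of $f$ split cleanly along the Theorem A decomposition so that $g(x)$ decomposes as a sum of the per-block contributions used above.
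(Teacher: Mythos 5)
Your proposal is correct, and its first step is exactly the paper's: both of you use Birkhoff's ergodic theorem plus the exterior-power identity (\ref{tiji2}) at $p=M(x,f)$ to rewrite the middle integral as $\int\sum_{\lambda_j(x,f)>0}\lambda_j(x,f)m_j(x,f)\,d\mu$, and both rest on the sandwich $\underline{E^{u}}(x)\subset E^u(\omega,x,f)\subset\overline{E^{u}}(x)$ coming from Lemma \ref{dengshi} and Corollary \ref{jia1}. Where you diverge is the comparison step. The paper stays at the level of exterior powers of three different orders, asserting
\[
\int\lim_{n\to\infty}\tfrac{1}{n}\log\bigl|(d_xf(n,\omega))^{\wedge M(x,f)}\bigr|\,d\mathbf{P}_{\nu}\times\mu
\;\leq\;
\int\lim_{n\to\infty}\tfrac{1}{n}\log\bigl|(d_xf(n,\omega))^{\wedge \overline{M(x)}}\bigr|\,d\mathbf{P}_{\nu}\times\mu ,
\]
and identifying the right-hand side with $\int\sum_i\sum_{\lambda_{j_k}(x,f_i)>0}\nu(f_i)\lambda_{j_k}(x,f_i)m_{j_k}(x,f_i)\,d\mu$ (and analogously with $\underline{M(x)}$ for the lower bound). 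You instead split the Lyapunov spectrum of $f$ along the Theorem A blocks and do the bookkeeping per block: exact exponent $\nu(f_1)\lambda_{j_1}(x,f_1)+\nu(f_2)\lambda_{j_2}(x,f_2)$ on $E_{j_1,j_2}(x)$ from Lemma \ref{dengshi}, upper bounds on the mixed blocks from Corollary \ref{jia1}, and the dimension identities $\sum_{j_2}\dim E_{j_1,j_2}(x)=d_{j_1}(x,f_1)$, $m_{j_k}(x,f_i)=d_{j_k}(x,f_i)+\dim(\text{mixed block})$. Your route buys genuine rigor at the one place the paper is shaky: $\lim_n\frac{1}{n}\log|(d_xf(n,\omega))^{\wedge p}|$ is the sum of the top $p$ exponents of $f$, which is monotone in $p$ only while the added exponents are positive; past $p=M(x,f)$ the added exponents are $\leq 0$, so the paper's displayed inequality above, read literally, points the wrong way, and its final ``$=$'' for the $\wedge\overline{M(x)}$ limit is really only ``$\leq$'' (the mixed blocks need not attain the bounds of Corollary \ref{jia1}). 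Your per-block estimate is precisely the content needed to make the upper bound correct, so your argument can be read as a repair of the paper's proof rather than merely an alternative. One caveat you share with the paper: the inclusion $E^u(\omega,x,f)\subset\overline{E^{u}}(x)$, i.e.\ that $E_{\alpha}(x)$ carries no positive random exponents, does not follow from Theorem A(d) alone (which only controls the two deterministic generators); it requires running the weighted-frequency argument of Lemma \ref{dengshi} and Corollary \ref{jia1} on $E_{\alpha}(x)$ as well, which both you and the paper invoke implicitly.
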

\begin{proof}
By Lemma \ref{dengshi} and Corollary \ref{jia1}, it is clear that $\underline{E^{u}}(x)\subset E^u(\omega, x, f)\subset \overline{E^{u}}(x)$, in particular, $\underline{M(x)}\leq M(x,f)\leq \overline{M(x)}$, for any $x\in\Gamma\cap\Gamma_f$, $\mathbf{P}_{\nu}$-a.s. Therefore, combining with Birkhoff ergodic theory, we have
\begin{eqnarray*}
\int\log|\mbox{det}(D_x f_{w}|_{E^u(w, x)})|d\mathbf{P}_{\nu}\times\mu
&=&\int\lim\limits_{n\rightarrow \infty}\frac{1}{n}\log \left|(d_x f(n,
\omega))^{\wedge M(x,f)}\right|d\mathbf{P}_{\nu}\times\mu\\
&\leq&\int\lim\limits_{n\rightarrow \infty}\frac{1}{n}\log \left|(d_x f(n,
\omega))^{\wedge \overline{M(x)}}\right|d\mathbf{P}_{\nu}\times\mu\\
&=&\int \sum_{i=1}^2\sum_{\lambda_{j_k}(x,f_i)>0}\nu(f_i)\lambda_{j_k}(x,f_i)m_{j_k}(x,f_i)d\mu.
\end{eqnarray*}
Similarly,
\begin{eqnarray*}
\int\log|\mbox{det}(D_x f_{w}|_{E^u(w, x)})|d\mathbf{P}_{\nu}\times\mu
&=&\int\lim\limits_{n\rightarrow \infty}\frac{1}{n}\log \left|(d_x f(n,
\omega))^{\wedge M(x,f)}\right|d\mathbf{P}_{\nu}\times\mu\\
&\geq&\int\lim\limits_{n\rightarrow \infty}\frac{1}{n}\log \left|(d_x f(n,
\omega))^{\wedge \underline{M(x)}}\right|d\mathbf{P}_{\nu}\times\mu\\
&=&\int \sum_{i=1}^2\sum_{\lambda_{j_k}(x,f_i)>0}\nu(f_i)\lambda_{j_k}(x,f_i)d_{j_k}(x,f_i)d\mu.
\end{eqnarray*}
\end{proof}

To prove Theorem B, we need to
establish the relation between local covering numbers of tangent maps of $f$
and Lyapunov exponents of generators.
For $A\subset X$, $\epsilon>0$, define
\begin{eqnarray*}
  r(A, \epsilon, d)=\inf \{n\geq 1:&&\text{there exist }  (x_1,\cdots, x_n)\in
  X^{n}\text{ and } (\epsilon_1,\cdots, \epsilon_n)\in \Bbb
  {R^+}^{n}\\
&& \mbox{such that}\ \  A\subset \cup_{i=1}^{n}B(x_i, \epsilon_i),\
\epsilon_i<\epsilon\}.
\end{eqnarray*}
For $T\in L(X)$, $\epsilon>0$, let
\[
R(T, \epsilon):=r(T(B_{X}), \epsilon, d),
\]
where $B_{X}$ denotes the unit ball in $X$. Let $\beta>0$. For $\omega\in
\Omega$ and $x\in K$, define
\[
\Delta_{\omega}^{\beta}(x,
f):=\lim\limits_{n\rightarrow\infty}\frac{1}{n}\log R(d_{x}f(n, \omega),
e^{-n\beta})
\]
whenever the limit exists. By \cite[Proposition 3.4]{l0}, the limit exists $\mathbf{P}_{\nu}\times\mu$-a.s.
\begin{lemma}\label{Lb}
Let $f$ be a finitely generated random transformation of an infinite dimensional Hilbert
space $X$ over $(\Omega, \mathcal{A}, \mathbf{P}_{\nu}, \theta)$. Suppose $\mu\in \mathcal{M}$ and $(f_i,\mu)$ satisfies (H0-H1), and $0<\beta<-\max\{l_{\alpha}(f_1),l_{\alpha}(f_2)\}$. Then for $\mathbf{P}_{\nu}\times\mu$-a.e. $(\omega,x)$,
\begin{eqnarray*}
&&\Delta_{\omega}^{\beta}(x, f)\leq\sum_{j_1=1}^{r(x,f_1)}\sum_{j_2=1}^{r(x,f_2)}[\nu(f_1)(\lambda_{j_1}(x,f_1)+\beta)+\nu(f_1)(\lambda_{j_1}(x,f_1)+\beta)]^+ m_{j_1,j_2}(x)\\
&+&\sum_{j_2=1}^{r(x,f_2)}\nu(f_1)(\lambda_{j_1}(x,f_1)+\beta)^+m_{r(x,f_1)+1,j_2}+\sum_{j_2=1}^{r(x,f_2)}\nu(f_2)(\lambda_{j_2}(x,f_2)+\beta)^+m_{j_1,r(x,f_2)+1}.
\end{eqnarray*}
 and
  \begin{equation}\label{cover-lya2}
\Delta_{\omega}^{\beta}(x, f)\geq\sum_{i=1}^2\sum_{k=1}^{r(x,f_i)}\nu(f_i)(\lambda_{j_k}(x,f_i)+\beta)^+d_{j_k}(x,f_i).
  \end{equation}
\end{lemma}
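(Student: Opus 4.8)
The plan is to relate the local covering number $R(d_xf(n,\omega),e^{-n\beta})$ to the action of $d_xf(n,\omega)$ on the splitting from Theorem A, and then pass to Lyapunov exponents of the generators via the exterior-power/volume formula already established. The key observation is that a linear map $T$ sends the unit ball $B_X$ to an ellipsoid-like set whose covering number at scale $e^{-n\beta}$ is governed by the product of those singular values (expansion rates) of $T$ that exceed $e^{-n\beta}$. In the asymptotic $\frac1n\log$-scale, a direction $v$ in a block $E_{j_1,j_2}(x)$ contributes to $R$ exactly when its random growth rate, which by Lemma \ref{dengshi} equals $\nu(f_1)\lambda_{j_1}(x,f_1)+\nu(f_2)\lambda_{j_2}(x,f_2)$, stays above $-\beta$; that is, precisely when $\nu(f_1)(\lambda_{j_1}+\beta)+\nu(f_2)(\lambda_{j_2}+\beta)>0$, contributing the positive part of this quantity with multiplicity $m_{j_1,j_2}(x)$.

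So first I would recall the standard comparison (as in \cite[Proposition 3.4]{l0}) between $\Delta_\omega^\beta(x,f)$ and the sum of positive parts of the $\beta$-shifted Lyapunov exponents of the random cocycle $d_xf(n,\omega)$; concretely, $\Delta_\omega^\beta(x,f)=\sum_{j}(\lambda_j(x,f)+\beta)^+m_j(x,f)$, where the sum runs over the random exponents from Theorem \ref{thm2.1}. The whole proof then reduces to bounding this quantity above and below using the inclusions $\underline{E^u}(x)\subset E^u(\omega,x,f)\subset\overline{E^u}(x)$ from Lemma \ref{srb}, together with the identification of growth rates on each block. For the \textbf{upper bound}, I would note that the random unstable space $E^u(\omega,x,f)$ sits inside $\overline{E^u}(x)$, so every random exponent $\lambda_j(x,f)+\beta$ that is positive arises from one of the blocks $E_{j_1,j_2}$, $E_{r(x,f_1)+1,j_2}$, or $E_{j_1,r(x,f_2)+1}$; applying Lemma \ref{dengshi} and Corollary \ref{jia1} to each block and summing the positive parts of the respective growth rates (with $\lambda_\alpha\to 0$ absorbed appropriately) yields the stated three-term upper bound.

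For the \textbf{lower bound} \eqref{cover-lya2}, I would argue that the covering number is at least as large as that induced by expansion on the ``core'' space $\underline{E^u}(x)=\bigoplus E_{j_1,j_2}(x)$, on which the growth rates are given exactly by Lemma \ref{dengshi}. Restricting the covering estimate to this subspace and using the definitions $d_{j_1}(x,f_1)=m_{j_1}(x,f_1)-\dim E_{j_1,r(x,f_2)+1}(x)$ and $d_{j_2}(x,f_2)=m_{j_2}(x,f_2)-\dim E_{r(x,f_1)+1,j_2}(x)$, which precisely count the dimensions that survive in the core after removing the mixed directions, a reindexing of the double sum $\sum_{j_1,j_2}$ collapses the contributions into the single sum $\sum_{i=1}^2\sum_{k}\nu(f_i)(\lambda_{j_k}(x,f_i)+\beta)^+d_{j_k}(x,f_i)$ appearing in \eqref{cover-lya2}.

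\textbf{The hard part} will be the bookkeeping in the lower bound: the naive block-by-block growth rate on $E_{j_1,j_2}$ is the coupled sum $\nu(f_1)(\lambda_{j_1}+\beta)+\nu(f_2)(\lambda_{j_2}+\beta)$, whereas \eqref{cover-lya2} decouples it into separate $f_1$- and $f_2$-contributions weighted by the reduced multiplicities $d_{j_k}$. Reconciling these requires the elementary but careful inequality $(a+b)^+\ge a^++b^+ - (\text{correction})$ handled by summing over the correct index ranges, plus the crucial fact that $d_{j_k}$ discards exactly those one-sided mixed blocks on which one of the generators' exponents is non-positive (at the $\lambda_\alpha=0$ threshold). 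I would therefore spend most of the effort verifying that the subtracted dimensions $\dim E_{j_1,r(x,f_2)+1}$ and $\dim E_{r(x,f_1)+1,j_2}$ align with the directions where one coordinate's positive part vanishes, so that the decoupled lower bound is genuinely dominated by the coupled core estimate.
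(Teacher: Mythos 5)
Your block bookkeeping is essentially right: the splitting of Theorem A is equivariant under the random cocycle, the coupled rate on each core block $E_{j_1,j_2}(x)$ is $\nu(f_1)\lambda_{j_1}(x,f_1)+\nu(f_2)\lambda_{j_2}(x,f_2)$ by Lemma \ref{dengshi}, and since $m_{j_1}(x,f_1)=\sum_{j_2}m_{j_1,j_2}(x)+\dim E_{j_1,r(x,f_2)+1}(x)$ one has $d_{j_1}(x,f_1)=\sum_{j_2}m_{j_1,j_2}(x)$, so the decoupled sum in \eqref{cover-lya2} coincides with the coupled core sum (using $\nu(f_1)+\nu(f_2)=1$). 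The first genuine gap is that your entire reduction rests on the claimed ``standard'' identity $\Delta_\omega^\beta(x,f)=\sum_j(\lambda_j(x,f)+\beta)^+m_j(x,f)$, attributed to \cite[Proposition 3.4]{l0}. That proposition only gives existence of the limit defining $\Delta_\omega^\beta$; the two-sided identity --- in particular the lower ($\geq$) direction, which is never needed for Ruelle's inequality in \cite{l0} --- is not available there, and proving it is precisely the technical content of this lemma. The paper accordingly does not pass through the random exponents $\lambda_j(x,f)$ at all: it runs the covering/packing argument directly on the commuting splitting, covering $d_xf(n,\omega)(B_E)$ block by block at an intermediate scale $e^{-n\gamma}$ with $\beta<\gamma\leq-\max\{l_\alpha(f_1),l_\alpha(f_2)\}$ for the upper bound, and packing the images of the core blocks at a scale $e^{-n\gamma}$ with $\gamma<\beta$ for the lower bound, the projection norms and the $E_\alpha$-part being controlled exactly because $\beta<-\max\{l_\alpha(f_1),l_\alpha(f_2)\}$. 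Your proposal assumes this work has already been done elsewhere, which it has not.

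The second gap is in your treatment of the ``hard part.'' The inequality you invoke, $(a+b)^+\geq a^++b^+-(\text{correction})$, points the wrong way: in general $(a+b)^+\leq a^++b^+$, so no correction term can upgrade a blockwise coupled estimate when some shifted exponents are negative. What actually makes the decoupling an identity is the choice of index range: in the paper's proof $r(x,f_i)$ is redefined as the largest index with $\lambda_{r(x,f_i)}(x,f_i)\geq-\beta$ (so the effective threshold is $-\beta$, not $\lambda_\alpha=0$ as you state), whence every term $\nu(f_i)(\lambda_{j_k}(x,f_i)+\beta)$ that appears is nonnegative and $[\nu(f_1)(\lambda_{j_1}+\beta)+\nu(f_2)(\lambda_{j_2}+\beta)]^+=\nu(f_1)(\lambda_{j_1}+\beta)^++\nu(f_2)(\lambda_{j_2}+\beta)^+$ holds exactly; no correction arises. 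Finally, a packing lower bound on a block requires control of the minimal expansion $\|(d_xf(n,\omega)|_{E_{j_1,j_2}})^{-1}\|^{-1}$, not merely the operator-norm growth stated in Lemma \ref{dengshi}; this is supplied by the two-sided estimate with the function $Q(x)$ quoted from \cite[Lemma 3.2]{Hu}, combined with commutativity (writing $f(n,\omega)=f_1^{n(\omega,f_1)}\circ f_2^{n(\omega,f_2)}$), a step your outline never invokes.
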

\begin{proof}
  Let $0<\beta<-\-\max\{l_{\alpha}(f_1),l_{\alpha}(f_2)\}$. For $x\in \Gamma_f\cap\Gamma$, $i=1,2$, let $r(x,f_i)$ be the maximal number such
that $\lambda_{r(x,f_i)}(x,f_i)\geq -\beta$. By (H1) and Theorem A, consider the decomposition
%$E_{r(x,f_1)+1,j_2}(x)=E_{r(x,f_1)+1,j_2}(x)=\{0\}$, for any $1\leq j_1\leq r(x,f_1)$, $1\leq j_2\leq r(x,f_2)$.
$$
X=\bigoplus_{j_1=1}^{r(x,f_1)}
\bigoplus_{j_2=1}^{r(x,f_2)}E_{j_1,j_2}(x)
\bigoplus_{j_2=1}^{r(x,f_2)}E_{r(x,f_1)+1,j_2}(x)
\bigoplus_{j_1=1}^{r(x,f_1)}E_{j_1,r(x,f_2)+1}\bigoplus E_{\alpha}(x)
$$
with $\pi_{j_1,j_2}, \pi_{r(x,f_1)+1,j_2}, \pi_{j_1,r(x,f_2)+1},\pi_{\alpha}$ being the family
of associated projections. Then
\begin{eqnarray*}
&&B_{E}\subset
\oplus_{j_1=1}^{r(x,f_1)}
\oplus_{j_2=1}^{r(x,f_2)}|\pi_{j_1,j_2}|B_{j_1,j_2}\oplus_{j_2=1}^{r(x,f_2)}|\pi_{r(x,f_1)+1,j_2}|B_{r(x,f_1)+1,j_2}(x)
\\
&&\oplus_{j_1=1}^{r(x,f_1)}|\pi_{j_1, r(x,f_2)+1}|B_{j_1,r(x,f_2)+1}\oplus |\pi_{\alpha}| B_{E_{\alpha}}.
\end{eqnarray*}

Consider $d_{x}f(n,\omega)(B_{E})$. On the one hand,
\begin{eqnarray*}
  d_x f(n,\omega)(B_{E}) \subset \oplus_{j_1=1}^{r(x,f_1)}
\oplus_{j_2=1}^{r(x,f_2)}|\pi_{j_1,j_2}|d_xf(n,\omega)(B_{j_1,j_2}) \\\oplus_{j_2=1}^{r(x,f_2)}|\pi_{r(x,f_1)+1,j_2}|d_xf(n,\omega)(B_{r(x,f_1)+1,j_2})
\oplus
 |\pi_{\alpha}|d_x f(n,\omega)(B_{E_{\alpha}}).
\end{eqnarray*}
So for $\beta<\gamma\leq-\-\max\{l_{\alpha}(f_1),l_{\alpha}(f_2)\}$, if we
choose $n$ large such that
\[
\|d_x
f(n,\omega)\|_{\alpha}<e^{-n\gamma}<(\sum_{j_1=1}^{r(x,f_1)}
\sum_{j_2=1}^{r(x,f_2)}|\pi_{j_1,j_2}|+\sum_{j_1=1}^{r(x,f_1)}|\pi_{j_1,r(x,f_2)+1}|+\sum_{j_2=1}^{r(x,f_2)}|\pi_{r(x,f_1)+1,j_2}|+|\pi_{\alpha}|)^{-1}e^{-n\beta},
\]
then
$$
r(d_xf(n,\omega)\left(B_{E}\right), e^{-n\beta})\leq
\prod_{j_1=1}^{r(x,f_1)}\prod_{j_2=1}^{r(x,f_2)}r(d_xf(n,\omega)(B_{E_{j_1,j_2}}), e^{-n\gamma})\cdot$$$$\prod_{j_1=1}^{r(x,f_1)}r(d_xf(n,\omega)(B_{E_{j_1,r(x,f_2)+1}}),e^{-n\gamma})\cdot\prod_{j_2=1}^{r(x,f_2)}r(d_xf(n,\omega)(B_{E_{r(x,f_1)+1,j_2}}),e^{-n\gamma}).
$$
For each $1\leq j_1\leq r(x,f_1)$, $1\leq j_2\leq r(x,f_2)$, we
have
\begin{eqnarray*}
r(d_xf(n,\omega)(B_{E_{j_1,j_2}}), e^{-n\gamma})\leq \{[m_{j_1,j_2}\cdot
\|d_xf(n,\omega)|_{E_{j_1,j_2}}\|\cdot e^{n\gamma}]+1\}^{m_{j_1,j_2}},\\
r(d_xf(n,\omega)(B_{E_{j_1,r(x,f_2)+1}}), e^{-n\gamma})\leq \{[m_{j_1,r(x,f_2)+1}\cdot
\|d_xf(n,\omega)|_{E_{j_1,r(x,f_2)+1}}\|\cdot e^{n\gamma}]+1\}^{m_{j_1,r(x,f_2)+1}},\\
r(d_xf(n,\omega)(B_{E_{r(x,f_1)+1,j_2}}), e^{-n\gamma})\leq \{[m_{r(x,f_1)+1,j_2}\cdot
\|d_xf(n,\omega)|_{E_{r(x,f_1)+1,j_2}}\|\cdot e^{n\gamma}]+1\}^{m_{r(x,f_1)+1,j_2}},
\end{eqnarray*}
where $[a]$ denotes the integer part of the number $a$, $m_{j_1,j_2}=dim(E_{j_1,j_2})$, $m_{r(x,f_1)+1,j_2}=dim(E_{r(x,f_1)+1,j_2})$ and $m_{j_1,r(x,f_2)+1}=dim(E_{j_1,r(x,f_2)+1})$. From this,
we deduce that
\begin{eqnarray*}
&&\varlimsup_{n\rightarrow\infty}\frac{1}{n}\log R(d_{x}f(n, \omega),
e^{-n\beta})\\
&\leq&\sum_{j_1=1}^{r(x,f_1)}\sum_{j_2=1}^{r(x,f_2)}[\nu(f_1)(\lambda_{j_1}(x,f_1)+\gamma)+\nu(f_1)(\lambda_{j_1}(x,f_1)+\gamma)]^+ m_{j_1,j_2}(x)\\
&+&\sum_{j_2=1}^{r(x,f_2)}\nu(f_1)(\lambda_{j_1}(x,f_1)+\gamma)^+m_{r(x,f_1)+1,j_2}+\sum_{j_2=1}^{r(x,f_2)}\nu(f_2)(\lambda_{j_2}(x,f_2)+\gamma)^+m_{j_1,r(x,f_2)+1}.
\end{eqnarray*}

Since $\gamma>\beta$ is arbitrary, we have the first inequality.

For the other inequality, let $\gamma$ be such that
$\max\{-\lambda_{r(x,f_1)},-\lambda_{r(x,f_2)}\}<\gamma<\beta$. Let $n$ be large such that
\[
\|d_x
f(n,\omega)\|_{\alpha}<e^{-n\beta}<(2r(x,f_1)\times r(x,f_2)\sum_{j_1=1}^{r(x,f_1)}
\sum_{j_2=1}^{r(x,f_2)}|\pi_{j_1,j_2}|)^{-1}e^{-n\gamma}.
\]
Since $\frac{1}{r(x,f_1)\times r(x,f_2)}(\bigoplus_{j_1=1}^{r(x,f_1)}
\bigoplus_{j_2=1}^{r(x,f_2)}B_{j_1,j_2})\subset B_{E}$, we
have
\[
r(d_xf(n,\omega)(B_{E}), e^{-n\beta})\geq
\prod_{j_1=1}^{r(x,f_1)}\prod_{j_2=1}^{r(x,f_2)}
S(d_xf(n,\omega)(B_{E_{j_1,j_2}}), e^{-n\gamma}),
\]
where $S(A, \delta)$ is the maximal number of subcollection of $A$
such that any two points of it has distance at least $\delta$. Now
for $1\leq j_1\leq r(x,f_1)$, $1\leq j_2\leq r(x,f_2)$,
\[
S(d_xf(n,\omega)(B_{E_{j_1,j_2}}), e^{-n\gamma})\geq
\max\{(2e^{n\gamma}m_{j_1,j_2}^{-1}\|
d_xf(n,\omega)|_{E_{j_1,j_2}}^{-1}\|^{-1})^{m_{j_1,j_2}},
1\}.
\]
From this we deduce that
\[
\varliminf_{n\rightarrow\infty}\frac{1}{n}\log R(d_xf(n,\omega),
e^{-n\beta})\geq \sum_{i=1}^2\sum_{k=1}^{r(x,f_i)}\nu(f_i)(\lambda_{j_k}(x,f_i)+\gamma)^+d_{j_k}(x,f_i).
\]
Since $\gamma<\beta$ is arbitrary,
(\ref{cover-lya2}) is also proved. We are done.
\end{proof}
\begin{proof}[Proof of Theorem B]The main strategy of the proofs are making necessary modifications by comparing the dynamics of $f$ and the generators. We indicate that Lemma \ref{Lb} is the key step to establish the relation between Lyapunov exponents of $f$ and those of its generators $f_1$ and $f_2$.
So we will concentrate upon the necessary
modifications and omit most of the parallel arguments, for which we refer
the reader to \cite{l0}. By ergodic decomposition theorem \cite[Theorem 1.1]{LQ95}, we restrict ourselves to the case $\mu\in \mathcal{M}_f^e.$
Let $0<\beta<-\-\max\{l_{\alpha}(f_1),l_{\alpha}(f_2)\}$.
For each $k\in \mathbb{N}$, let
\begin{equation}\label{ak}
A_k:=\{\omega\in \Omega:\ \ \frac{1}{n}\log \sup_{x\in K}\|d_x f(n,\omega)\|_{\alpha}<\frac{1}{2}(l_{\alpha}-\beta)\ \mbox{for}\  n\geq
k\}.
\end{equation}
$\mathbf{P}_{\nu}(A_{k})$ increases to $1$ as $k$ goes to
infinity. For $k\in \Bbb N$, define
\begin{equation*}
  f_k(\omega, x)=\left\{
              \begin{array}{ll}
                \log R(d_x f(k, \omega), e^{-k\beta}), & \hbox{if}\ \omega\in A_k, \ x\in K; \\
                0, & \hbox{otherwise.}
              \end{array}
            \right.
\end{equation*}
Then by Lemma \ref{Lb} and similar argument in \cite[Lemma 3.5]{l0}, for  $\mathbf{P}_{\nu}\times\mu\mbox{-a.e.}$ $(\omega, x)$,
\begin{eqnarray*}
&&\lim\limits_{k\rightarrow \infty}\frac{1}{k}f_k(\omega, x)\\
&\leq&\sum_{j_1=1}^{r(x,f_1)}\sum_{j_2=1}^{r(x,f_2)}[\nu(f_1)(\lambda_{j_1}(x,f_1)+\beta)+\nu(f_1)(\lambda_{j_1}(x,f_1)+\beta)]^+ m_{j_1,j_2}(x)\\
&+&\sum_{j_2=1}^{r(x,f_2)}\nu(f_1)(\lambda_{j_1}(x,f_1)+\beta)^+m_{r(x,f_1)+1,j_2}+\sum_{j_2=1}^{r(x,f_2)}\nu(f_2)(\lambda_{j_2}(x,f_2)+\beta)^+m_{j_1,r(x,f_2)+1}.
\end{eqnarray*}

Let $\mathcal{P}$ be a finite measurable partition of
$\Omega\times X$. For $k\in \Bbb N$, define a function $A(f_k, \mathcal{P}):\
\Omega\times X\to \Bbb R$ by letting
\begin{equation*}
  A(f_k, \mathcal{P})(\omega, x)=\sum_{P\in \mathcal{P}^w}\left(\sup_{x\in P}
f_k(\omega,x)\right)\cdot \chi_{P}(x),
\end{equation*}
where $\chi_{P}$ is the characteristic function of the set $P$.  We
have the following  relation between $f_k$ and
$A(f_k,\mathcal{P})$ \cite[Proposition 3.7]{l0} :
\begin{equation}\label{step1}
\lim\limits_{n\rightarrow \infty}\frac{1}{n}f_n(\omega,
x)=\varlimsup_{m\rightarrow \infty}\varlimsup_{n\rightarrow
\infty}\frac{1}{n}A(f_n, \widetilde{\mathcal{P}}_{-n}^{m})(\omega, x),\
\mathbf{P}_{\nu}\times\mu\mbox{-a.e.}
\end{equation}
We begin with the selection of a sequence of ``good'' sets $A_{k,
l}^{m}$.  Let $0<\beta<-\frac{1}{5}\max\{l_{\alpha}(f_1),l_{\alpha}(f_2)\}$ be fixed. Let $\{\widetilde{\mathcal{P}}^m\}_{m\in \Bbb N}$ be as above so that (\ref{step1})
holds. Let
\begin{eqnarray*}
&&\Delta=\sum_{j_1=1}^{r(x,f_1)}\sum_{j_2=1}^{r(x,f_2)}[\nu(f_1)(\lambda_{j_1}(x,f_1)+\beta)+\nu(f_1)(\lambda_{j_1}(x,f_1)+\beta)]^+ m_{j_1,j_2}(x)\\
&+&\sum_{j_2=1}^{r(x,f_2)}\nu(f_1)(\lambda_{j_1}(x,f_1)+\beta)^+m_{r(x,f_1)+1,j_2}+\sum_{j_2=1}^{r(x,f_2)}\nu(f_2)(\lambda_{j_2}(x,f_2)+\beta)^+m_{j_1,r(x,f_2)+1}.
\end{eqnarray*}

For $m\in \Bbb N$, consider
\[
D^m:=\{(\omega, x):\ \lim\limits_{n\rightarrow \infty} \frac{1}{n}A(f_n,
\widetilde{\mathcal{P}}_{-n}^{m})(\omega, x)\leq \Delta+\frac{1}{2}\beta\}.
\]
It is clear that $\mathbf{P}_{\nu}\times\mu(D^m)$ tends to $1$ as $m$ tends to infinity.
For $k\in \Bbb N$, let
\[
D^{m}_{k}:=\{(\omega, x):\ \frac{1}{n}A(f_n,
\widetilde{\mathcal{P}}_{-n}^{m})(\omega,x)\leq \Delta+\beta\ \mbox{for}\  n\geq
k\}.
\]
Then $\mathbf{P}_{\nu}\times\mu(D^{m}_{k})$ increases to  $\mathbf{P}_{\nu}\times\mu(D_{m})$ as $k$ increases to infinity. For  $k\in \Bbb N$, let $A_k$  be  as in (\ref{ak}). For $l\in \Bbb N$, define
\begin{eqnarray*}
  A_{k, l}:=\{ \omega\in A_{k}: && \|f(k, \omega)x-f(k, \omega)y-d_x f(\omega,
  k)(x-y)\|\leq e^{-k\beta}\epsilon\notag\\
  && \mbox{for}\ x, y\in K,\ \|x-y\|\leq \epsilon,\
  \epsilon<\epsilon_0/l\}.\label{equ3.6}
\end{eqnarray*}
Since $ f(k, \omega)$ is $C^1$ in a neighbourhood of $K$ and
$K$ is compact, we see that $\mathbf{P}_{\nu}(A_{k, l})$ increases to
$\mathbf{P}_{\nu}(A_{k})$ as $l$ goes to infinity and hence
\[
\lim\limits_{k\rightarrow \infty}\lim\limits_{l\rightarrow
\infty}\int_{\Omega\backslash A_{k, l}}\log^{+}\sup\limits_{x\in
B(K, \epsilon_0)}\|d_x f_{\omega}\|\ d\mathbf{P}_{\nu}(w)=0.
\]
Fix $A_{k, l}$ and for
$m\in \Bbb N$, define
\[
A_{k, l}^{m}:=\{(\omega, x)\in D_{k}^{m}:\ \ \omega\in A_{k, l}\}.
\]
We have
\begin{eqnarray*}
  &&\lim\limits_{m\rightarrow \infty}\lim\limits_{k\rightarrow
\infty}\lim\limits_{l\rightarrow \infty}\mathbf{P}_{\nu}\times\mu(A_{k, l}^{m})=1;\\
&& \lim\limits_{m\rightarrow \infty}\lim\limits_{k\rightarrow
\infty}\lim\limits_{l\rightarrow \infty}\int_{(\Omega\times
X)\backslash A_{k, l}^{m}}\log^+\sup_{x\in B(K, \epsilon_0)}\|d_x
f_{\omega}\|\ d\mathbf{P}_{\nu}\times\mu(\omega, x)=0.
\end{eqnarray*}
By \cite[Lemma 3.8]{l0},
\[
h_{\mu}(f)=\lim\limits_{m\rightarrow \infty}\lim\limits_{k\rightarrow
\infty}\lim\limits_{l\rightarrow
\infty}\lim\limits_{\epsilon\rightarrow 0}\varliminf_{n\rightarrow
\infty}-\frac{1}{n}\log \mu_{w}\left(B_{A_{k, l}^{m}, n}^{\omega}(x,
\epsilon)\right), \ \mathbf{P}_{\nu}\times\mu\mbox{-a.e.}
\]
where \begin{eqnarray*}
B_{A, n}^{\omega}(x, \epsilon):=\{ y\in K: && \mbox{for}\  0\leq
i\leq n-1,\ F^i(\omega,x)\in A\
\mbox{iff}\ F^i(\omega, y)\in A\notag\\
&& \mbox{and} \ d(f(i, \omega)x, f(i, \omega)y)<\epsilon\ \mbox{if}\
F^i(\omega,x)\in A\}.
\end{eqnarray*}
The proof is thus finished by \cite[Proposition 3.9]{l0},
since there exists $N_0\in \Bbb
N$ such that for $\mathbf{P}_{\nu}\times\mu$-a.e. $(\omega, x)$, the inequality
\[
\lim\limits_{\epsilon\rightarrow 0}\varliminf_{n\rightarrow
+\infty}-\frac{1}{n}\log \mu\left(B_{A_{k, l}^{m}, n}^{\omega}(x,
\epsilon)\right)\leq \Delta+3\beta
\]
holds for any $k, l, m\geq N_0$.
\end{proof}

\subsection{Proof of Theorem C}

\begin{proof}[Proof of Theorem C]
Again we will concentrate upon the necessary
modifications and omit most of the parallel arguments, for which we refer
the reader to \cite{l1}.

Let $\eta$ be a partition subordinate to the unstable manifolds as in \cite[Proposition 2.9]{l1}. Since, for any $n\in \Bbb N$,
$
\frac{1}{n}H_{\mathbf{P}_{\nu}\times\mu}(F^{-n}\eta|\eta)
= H_{\mathbf{P}_{\nu}\times\mu}(F^{-1}\eta|\eta)$, we have
	\begin{eqnarray*}
	 H_{\mathbf{P}_{\nu}\times\mu}(F^{-1}\eta|\eta)= \lim\limits_{n\rightarrow \infty}\frac{1}{n} H_{\mathbf{P}_{\nu}\times\mu}(F^{-n}\eta|\eta)\leq h_{\mu}(f,\eta)\leq h_{\mu}(f).
	\end{eqnarray*}
Notice that
$$
  H_{\mathbf{P}_{\nu}\times\mu}(F^{-1}\eta|\eta)=-\int\log \mu^{\eta^{\omega} (x)}((F^{-1}\eta)^{\omega}(x))\ d\mathbf{P}_{\nu}\times\mu.
$$
Assume $\mu$ satisfies SRB property, then
$$
-\int\log \mu^{\eta^{\omega} (x)}((F^{-1}\eta)^{\omega}(x))\ d\mathbf{P}_{\nu}\times\mu=\int\log|\mbox{det}(d_x f_{\omega}|_{E^u(\omega, x)})|d\mathbf{P}_{\nu}\times\mu.
$$
By Lemma \ref{srb} and Theorem A, we obtain the desired inequality.
\end{proof}
\begin{proof}[Proof of Corollary \ref{c1}]
 By Lemma \ref{t1}, Lemma \ref{srb} and assumption (H3),
$$
E^u(x):=E^u(x, f_1)=E^u(x, f_2)=\bigoplus_{j_1=1}^{r(x,f_1)}
\bigoplus_{j_2=1}^{r(x,f_2)}E_{j_1,j_2}(x).
$$
Thus,  \begin{equation*}
\int\log|\mbox{det}(D_x f_{w}|_{E^u(w, x)})|d\mathbf{P}_{\nu}\times\mu\geq \int \sum_{i=1}^2\sum_{\lambda_{j_k}(x,f_i)>0}\nu(f_i)\lambda_{j_k}(x,f_i)m_{j_k}(x,f_i)d\mu.
\end{equation*}
Therefore, combining with Theorem C, we get the desired result.
\end{proof}

\subsection{Proof of Theorem D}

\begin{proof}[Proof of Theorem D]
By \cite{Geller}, $F$ is an extension of $\sigma_{\mathbf{f}}$ since we can define a map
$$
\tilde{\pi}: \Omega\times K\longrightarrow K_{\mathbf{f}},\;\tilde{\pi}(\omega,x)=\{f(n,\omega)(x)\}_{n\in{\mathbb{N}}}
$$
such that $\tilde{\pi}\circ F=\sigma_{\mathbf{f}} \circ \tilde{\pi}$.
Therefore, $h(\sigma_{\mathbf{f}})\leq h(F)$.

By Abromov-Rohklin formula \cite{Abromov}, for any invariant measure of $F$ in the form of $\mathbf{P}_{\nu}\times\mu$, we have
$$
h_{\mathbf{P}_{\nu}\times\mu}(F)=h_{\mathbf{P}_{\nu}}(\theta) +h_{\mu}(f).
$$
Since $h_{\mathbf{P}_{\nu}}(\theta)= -\sum_{i=1}^2\nu(f_i)\log\nu(f_i)$, by (\ref{m2}),
\begin{equation}\label{Abromov}
h_{\mathbf{P}_{\nu}\times\mu}(F)=-\sum_{i=1}^2\nu(f_i)\log\nu(f_i) +\int \sum_{i=1}^2\sum_{\lambda_{j_k}(x,f_i)>0}\nu(f_i)\lambda_{j_k}(x,f_i)m_{j_k}(x,f_i)
d\mu.
\end{equation}
By the assumption on the measure with maximal entropy of $F$ and the fact $h(\sigma_{\mathbf{f}})\leq h(F)$,  we obtain (\ref{Fried1}).

When $\mu$ is ergodic, (\ref{Abromov}) becomes
\begin{equation}\label{Fried4}
h_{\mathbf{P}_{\nu}\times\mu}(F)=-\sum_{i=1}^2\nu(f_i)\log\nu(f_i) + \sum_{i=1}^2\sum_{\lambda_{j_k}(x,f_i)>0}\nu(f_i)\lambda_{j_k}(x,f_i)m_{j_k}(x,f_i)
.
\end{equation}
Define a function
$$
J:\Omega\longrightarrow \mathbb{R}^+,\;
J(\omega)=\sum_{i=1}^2\sum_{\lambda_{j_k}(x,f_{\omega})>0}\nu(f_{\omega})\lambda_{j_k}(x,f_{\omega})m_{j_k}(x,f_{\omega}).
$$
Then (\ref{Fried4}) becomes
\begin{equation}\label{Fried5}
h_{\mathbf{P}_{\nu}\times\mu}(F)= -\sum_{i=1}^2\nu(f_i)\log\nu(f_i) +\int_{\Omega}J d\mathbf{P}_{\nu}(\omega).
\end{equation}
 Since $\mathbf{P}_{\nu}\times\mu$ is a measure with maximal entropy of $F$, we can apply the variational principle of $F$ as follows
\begin{eqnarray}\label{pressure}
h(F)&=&\sup_{\mathbf{P}_{\nu'}}\Big\{h_{\mathbf{P}_{\nu'}}(\theta)+\int_{\Omega}J d\mathbf{P}_{\nu'}(\omega)\Big\}\\
&=&P(\theta,J),\notag
\end{eqnarray}
where the supremum is taken over all $\mathbf{P}_{\nu'}=\nu'^{\mathbb{N}}$ is the product measure of some Borel probability measure $\nu'$  on $\mathfrak{F}$ with $\nu'_i=\nu'(f_i)$, and in the last line we use the variational principle for the topological pressure $P(\theta,\eta_J)$ of $J$ with respect to $\theta$. From \cite[Chapter 9 ]{Walters}, we get that,
\begin{equation}\label{pressureJ}
P(\theta,J)=\log
\Big(\sum_{i=1}^2\exp(\sum_{\lambda_{j_k}(x,f_i)>0}\lambda_{j_k}(x,f_i)m_{j_k}(x,f_i))\Big).
\end{equation}
Therefore, by (\ref{pressure}) and (\ref{pressureJ}), $h(\sigma_{\mathbf{f}})\leq\log
\Big(\sum_{i=1}^2\exp(\sum_{\lambda_{j_k}(x,f_i)>0}\lambda_{j_k}(x,f_i)m_{j_k}(x,f_i))\Big).$
 Moreover, by
\cite[Theorem 9.16]{Walters}, $J$ has a unique equilibrium state which is the product measure defined by the measure on $\mathfrak{F}$ which gives the element $f_i$, $i=1,2$, measure
$$
\nu_i=\frac{\sum_{\lambda_{j_k}(x,f_i)>0}\exp(\lambda_{j_k}(x,f_i)m_{j_k}(x,f_i)}{\sum_{i=1}^2\sum_{\lambda_{j_k}(x,f_i)>0}\exp(\lambda_{j_k}(x,f_i)m_{j_k}(x,f_i))}.
$$
So any measure $\nu$ defined by above $\nu_i$ satisfies that the product measure $\mathbf{P}_{\nu}\times\mu$ is a measure with maximal entropy of $F$.

Furthermore, if $\mu$ is ergodic and $\mu (\{x\in X:f_1(x)=f_2(x)\})=0$, then we conclude that $\tilde{\pi}$ is one-to-one on a set of full $\mathbf{P}_{\nu}\times\mu$ measure.  So
 \begin{equation}\label{equality2}
h_{\mathbf{P}_{\nu}\times\mu}(F)=h_{\tilde{\pi}(\mathbf{P}_{\nu}\times\mu)}(\sigma_{\mathbf{f}}).
 \end{equation}
Moreover, by the variational principle for $\sigma_{\mathbf{f}}$  we have that
 \begin{equation}\label{equality3}
h_{\tilde{\pi}(\mathbf{P}_{\nu}\times\mu)}(\sigma_{\mathbf{f}})\leq h(\sigma_{\mathbf{f}}).
 \end{equation}
By (\ref{pressure}), (\ref{equality2}) and (\ref{equality3}), $h(F)\leq h(\sigma_{\mathbf{f}})$. Together with the previous inequality $h(\sigma_{\mathbf{f}})\leq h(F)$ we have $h(F)=h(\sigma_{\mathbf{f}})$, and hence by (\ref{pressure}) and (\ref{pressureJ}), formula (\ref{Fried3}) holds.
\end{proof}

\hspace*{-0.15in}{\bf Acknowledgements.} The first author is supported by NSFC (No: 11871394) and Natural Science Foundation of Shaanxi Province (2020JC-39), the second author is supported by NSFC (No: 11771118). The first author would also like to thank Professor Jon Aaronson and School of Mathematical Sciences of Tel Aviv University for hospitality during his visit there.

\end{document}